\numberwithin{equation}{section}
\theoremstyle{plain}
\newtheorem{thm}{Theorem}[section]
\newtheorem{lem}[thm]{Lemma}
\newtheorem{prop}[thm]{Proposition}
 \theoremstyle{definition}
\newtheorem{defn}[thm]{Definition}
\newcommand{\Hom}{{\rm{Hom}}}
\newcommand{\cl}[1]{\mathcal{#1}}
\newcommand{\bb}[1]{\mathbb{#1}}
\newcommand{\tr}{\operatorname{tr}}
\newcommand{\PSL}{\operatorname{PSL}}
\newcommand{\SL}{\operatorname{SL}}
\newcommand{\Gal}{\operatorname{Gal}}
\newcommand{\hfal}{\operatorname{h_{Fal}}}
\newcommand{\Ind}{\operatorname{Ind}}
\begin{document}

\title{Unitary $\PSL_2$ CM Fields and the Colmez Conjecture}

\author[S. Parenti]{Solly Parenti}

\address{University of Wisconsin-Madison \\ Department of Mathematics \\
Madison WI 53703} 

\email{sparenti@wisc.edu}

 \subjclass[2010]{Primary: 11G15}

 \keywords{CM fields, Colmez conjecture}

\begin{abstract} We study certain unitary CM fields whose Galois closure has Galois group $\PSL_2(\bb{F}_q) \times \bb{Z}/2\bb{Z}$.  After investigating the CM types of these fields, we turn towards Colmez's conjectural formula on the Faltings heights of CM abelian varieties.  We explicitly calculate the class functions appearing in the statement of the conjecture and then apply refinements of the recently proven average version to establish Colmez's conjecture in this case.
\end{abstract}

\maketitle

\section{Introduction} In a 1993 paper \cite{Col}, Pierre Colmez conjectured a relationship between the Faltings height of a CM abelian variety and log derivatives of certain $L$-functions, generalizing the classic Chowla-Selberg Formula.  While several cases and an average version of the conjecture have been proven, it remains for the most part open.

In this paper, we will verify the Colmez conjecture in the following specific case.  Let $F$ be a totally real number field whose Galois closure, $F^c$, has Galois group $\Gal(F^c/\bb{Q}) \cong \PSL_2(\bb{F}_q)$, where $q$ is an odd prime power.  Moreover, we also suppose $F$ is the fixed field by the Borel subgroup.  We let $k$ be any imaginary quadratic field and define a CM field $E:=kF$. 
\begin{thm}
The Colmez conjecture holds for $E$.
\end{thm}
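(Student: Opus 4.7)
The plan is to compute Colmez's class function $A^0_\Phi$ for each CM type $\Phi$ of $E$ explicitly, and then apply a refinement of the average Colmez conjecture to pass from averages to individual Faltings heights. First, I would parametrize $\Hom(E,\bb{C})$ via the action of $G := \PSL_2(\bb{F}_q) \times \bb{Z}/2\bb{Z}$: since $F$ is the fixed field of a Borel $B \subset \PSL_2(\bb{F}_q)$, embeddings of $F$ correspond to cosets $B \backslash \PSL_2(\bb{F}_q) \cong \bb{P}^1(\bb{F}_q)$, and tensoring with $k$ doubles these, with complex conjugation acting through the nontrivial element of $\bb{Z}/2\bb{Z}$. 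A CM type $\Phi$ is then a choice of one embedding from each complex conjugate pair.

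The crucial simplification is that $A^0_\Phi$ lives inside the permutation representation $\bb{Q}[G/(B \times 1)]$, which decomposes as the external tensor $(1 \oplus \mathrm{St}) \boxtimes (1 \oplus \epsilon)$, where $\mathrm{St}$ is the $q$-dimensional Steinberg representation of $\PSL_2(\bb{F}_q)$ and $\epsilon$ is the sign character of $\bb{Z}/2\bb{Z}$. By Frobenius reciprocity no cuspidal or principal series irreducibles can appear, so only four irreducible characters of $G$ contribute to $A^0_\Phi$: the trivial character, $\epsilon$, $\mathrm{St}$, and $\mathrm{St} \otimes \epsilon$. I would compute each multiplicity $\langle A^0_\Phi, \chi \rangle$ as an explicit function of $\Phi$ using the character table of $\PSL_2(\bb{F}_q)$, and then pair these with the corresponding Artin $L$-functions on the right-hand side of Colmez's formula.

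With the decomposition in hand, the refinement argument proceeds component by component. The trivial and $\epsilon$ pieces reduce to Colmez for $k$, known from the Chowla--Selberg formula. The $\mathrm{St}$ and $\mathrm{St}\otimes\epsilon$ pieces carry all of the $\Phi$-dependence and form the crux: the average Colmez conjecture, proven by Andreatta--Goren--Howard--Madapusi Pera and Yuan--Zhang, determines the average of these contributions, and the refinement must promote this to individual values. The main obstacle will be exactly this last step, since $|G|$ grows only polynomially in $q$ while there are $2^{q+1}$ CM types, so $G$ cannot act transitively on them. I expect to exploit the fact that Galois-conjugate CM types have equal Faltings heights to reduce to a short list of representative $\Phi$, and then balance the Steinberg multiplicities against $L(s,\mathrm{St})$-derivatives directly to match each individual height with the prediction.
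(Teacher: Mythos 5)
Your first half is sound and consistent with what the paper finds. Since $\Phi^c$ is a union of cosets of $\Gal(E^c/E)\cong B\times 1$, the group-ring element $\Phi^c\widetilde{\Phi^c}$ contains the idempotent $\frac{1}{\#B}\sum_{b\in B\times 1}b$ as a factor, so for any irreducible $\pi$ the coefficient $a_\chi$ in $A_\Phi^0$ vanishes unless $\pi$ has a nonzero $B\times 1$-fixed vector, i.e.\ unless $\chi$ occurs in $\Ind_{B\times 1}^{\Gal(E^c/\bb{Q})}(\chi_0)$; by Frobenius reciprocity that leaves exactly the four characters you name, and Theorem 6.1 of the paper confirms this (the Steinberg-tensor-trivial piece in fact drops out, and the surviving pieces are $\zeta_k$, $\chi_{k/\bb{Q}}$ and $\Ind_{\Gal(E^c/F)}^{\Gal(E^c/\bb{Q})}(\chi_{E/F})$). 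This a priori reduction is a legitimate shortcut past part of the conjugacy-class computation in Section 6.

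The genuine gap is at the decisive step, the passage from averages to an individual $\Phi$, which your own text flags as ``the main obstacle'' without supplying the idea. Invoking only the full average of Andreatta--Goren--Howard--Madapusi Pera and Yuan--Zhang gives one identity summed over all $2^{q+1}$ CM types, and your fallback of cutting down to Galois-orbit representatives cannot succeed: the table in Section 4 shows the number of inequivalent CM types of a single signature grows rapidly (already $233$ classes for $q=31$, $\epsilon=7$), so a single equation cannot pin down the individual heights, and ``balancing Steinberg multiplicities against $L$-derivatives'' is not yet an argument. What actually closes the gap in the paper is a combination your outline does not anticipate: (i) the explicit computation shows that $A_\Phi^0$ depends only on the signature $(q+1-\epsilon,\epsilon)$, not on the finer equivalence class of $\Phi$ --- this is the key structural discovery; (ii) Colmez's Th\'eor\`eme 0.3, by which $\hfal(\Phi)$ is determined by $A_\Phi^0$, so all CM types of a fixed signature have equal Faltings height; and (iii) the Yang--Yin refinement of the averaged conjecture, available precisely because $E$ contains the imaginary quadratic field $k$, which gives the Colmez identity averaged over CM types of a \emph{fixed signature}. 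Together these collapse the per-signature average to the statement for each individual $\Phi$. Relatedly, your plan to verify the trivial and sign ``pieces'' separately via Chowla--Selberg presupposes that the height itself decomposes along irreducible characters, which is not available without an input of type (ii); as written, that componentwise comparison is not licensed.
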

That is, let $\Phi$ be a CM type of $E$ and $X_{\Phi}$ an abelian variety with CM by $\cl{O}_E$, the ring of integers of $E$, and CM type $\Phi$.  Then the Faltings height of $X_{\Phi}$ can be written in terms of log derivatives of $L$-functions coming from $\Phi$, in a manner that will be made more precise in section 2. 

Given a CM type $\Phi$ of $E$, we will calculate which $L$-functions appear in the statement of the conjecture.  Surprisingly, these $L$-functions depend on $\Phi$ in a very simple way.  From this information, we will apply a refinement of the average conjecture due to Yang and Yin \cite{YangYin}.

In Section 2 of this paper, we will provide a more thorough introduction to the Colmez conjecture.  We also refer the reader to the well written introductions in \cite{BSM} and \cite{Yang}.  Section 3 establishes the notation and set up for our results.  In Section 4 we partially classify the CM types of $E$ and Section 5 contains the necessary group and representation theoretic data about $\PSL_2(\bb{F}_q)$. Section 6 contains the main calculation involved in the proof of Theorem 1.1 and Section 7 concludes the proof.

\section{Background}

Let $E$ be a CM number field, that is, a totally imaginary number field that is a degree 2 extension of $F$, a totally real number field.  Every such CM field has a unique automorphism $\rho:E \rightarrow E$ that acts as complex conjugation.  This is the unique automorphism with the property that for every $x \in E$ and every embedding $\sigma:E \hookrightarrow \bb{C}$, we have that $x^{\rho\sigma} = \overline{x^{\sigma}}$. In this paper, we will focus on unitary CM fields.

\begin{defn}A CM field $E$ is called unitary if it contains an imaginary quadratic field, which we will often call $k$. In this case, $E$ is the compositum of $k$ and the totally real number field $F$.
\end{defn}

The embeddings of $E$ into $\bb{C}$ come in conjugate pairs.  A choice of one embedding from every conjugate pair constitutes a CM type.  Viewing a disjoint union as a map to a discrete space leads to an alternate equivalent definition of a CM type.  We will primarily use Definition 2.2, but we include Definition 2.3 as it provides motivation for subsequent constructions in this section.

\begin{defn}
Let $E$ be a CM number field with complex conjugation $\rho$.  A subset $\Phi \subseteq \Hom(E,\bb{C})$ is a CM type if the following two equalities hold
\begin{align*}
\Phi \cap \rho\Phi &= \varnothing ,\\
\Phi \cup \rho\Phi &= \Hom(E,\bb{C}).
\end{align*}
\end{defn}

\begin{defn}
Let $E$ be a CM number field with complex conjugation $\rho$.  A function $\Phi:\Hom(E,\bb{C}) \rightarrow \{0,1\}$ is a CM type if $\Phi(\sigma) + \Phi(\rho\sigma) = 1$ for every $\sigma \in \Hom(E,\bb{C})$.
\end{defn}

Let $E$ be a CM field, $M$ be a CM field containing $E$, and $\Phi$ a CM type of $E$.  We can extend $\Phi$ to a CM type of $M$, $\Phi^M$, via
\[
\Phi^M = \{ \sigma:M \hookrightarrow \bb{C} : \sigma |_{E} \in \Phi\}.
\]
Often, we will take $M= E^c$, the Galois closure of $E$, and we write $\Phi^c$ for the extension of $\Phi$ to $E^c$.

\begin{defn}
Let $E$ be a Galois CM field, and let $\Phi_1$ and $\Phi_2$ be CM types of $E$.  We say $\Phi_1$ is equivalent to $\Phi_2$ if there is some $\tau \in \Gal(E^c/\bb{Q})$ with $\tau \Phi_1 = \Phi_2$.
\end{defn}

The initial data of the Colmez conjecture is a pair $(E,\Phi)$, with $E\subseteq \bb{C}$ a CM field $E$ and $\Phi$ a CM type of $E$.  The important quantities in the conjecture are invariant under extending $E$ to a larger CM field (and using the extended CM type) as well as replacing $\Phi$ by an equivalent CM type.

Let $\Phi^c$ denote the extension of $\Phi$ to a CM type on $E^c$, the Galois closure of $E$.  We identify $\Hom(E^c,\bb{C})$ with $\Gal(E^c/\bb{Q})$.  This choice is only well defined up to conjugacy.  Our construction will produce a class function, $A_{\Phi}^0$ and this choice of identification will not affect $A_{\Phi}^0$.  Write $\Phi^c$ as
\[
\Phi^c = \sum_{\sigma \in \Phi^c} \sigma.
\]
In writing this sum, we are thinking of $\Phi^c$ as an element of the group ring $\bb{C}[\Gal(E^c/\bb{Q})]$ and we will do arithmetic in this ring.  The justification behind this change of viewpoint is the following isomorphism of rings.  Here, $\operatorname{Maps}(\Gal(E^c/\bb{Q}),\bb{C})$ denotes the maps of sets from $\Gal(E^c/\bb{Q})$ to $\bb{C}$.  We give this a ring structure with operations pointwise addition of functions and convolution.
\begin{align*}
\bb{C}[\Gal(E^c/\bb{Q})] &\xrightarrow{\cong} \operatorname{Maps}(\Gal(E^c/\bb{Q}),\bb{C}) \\
\sum n_{\sigma}\sigma &\mapsto 
  \begin{cases} 
   \Gal(E^c/\bb{Q}) & \rightarrow \bb{C} \\
   \quad\quad \sigma       &\mapsto n_{\sigma}
  \end{cases}\\
\end{align*}

Consider the reflex CM type $\widetilde{\Phi^c}$ given by 
\[
\widetilde{\Phi^c} := \sum_{\sigma \in \Phi^c} \sigma^{-1}.
\]
Next, we define $A_{\Phi}$ by taking a convolution of $\Phi^c$ and $\widetilde{\Phi^c}$.  More precisely, 
\[
A_{\Phi}:= \frac{1}{[E^c:\bb{Q}]} \Phi^c \widetilde{\Phi^c}.
\]
Define the class function $A_{\Phi}^0$ by projecting $A_{\Phi}$ onto the space of class functions on $\Gal(E^c/\bb{Q})$.  I.e., 
\[
A_{\Phi}^0 := \frac{1}{[E^c:\bb{Q}]}\sum_{g \in \Gal(E^c/\bb{Q})} gA_{\Phi}g^{-1}.
\]

As the characters of the irreducible representations of $\Gal(E^c/\bb{Q})$ form a basis for the space of class functions, we can write $A_{\Phi}^0 = \sum_{\chi}a_{\chi}\chi$ with $a_{\chi} \in \bb{C}$ as $\chi$ ranges through the irreducible characters of $\Gal(E^c/\bb{Q})$.  For $s \in \bb{C}$, let $L(s,\chi)$ denote the Artin $L$-function of $\chi$ and let $f_{\chi}$ denote the Artin conductor of $\chi$. Finally, define the function $Z(s,A_{\Phi}^0)$ by
\[
Z(s,A_{\Phi}^0):= \sum_{\chi} a_{\chi}Z(s,\chi),\quad \quad Z(s,\chi):= \frac{L'(s,\chi)}{L(s,\chi)} + \frac{1}{2} \log f_{\chi}.
\]

The other side of the Colmez conjecture is more geometric.  As before, let $E$ be a CM field of degree $2n$ over $\bb{Q}$ and let $\Phi$ be a CM type of $E$.  Let $X_{\Phi}$ be an abelian variety with CM by $(\cl{O}_E, \Phi)$, where $\cl{O}_E$ is the ring of integers of $E$.  That is, $\dim(X_{\Phi}) = n$, we have an embedding $\cl{O}_E \hookrightarrow \text{End}(X_{\Phi})$, and $\Phi$ describes the action of $\cl{O}_E$ on the holomorphic differentials.

As $X_{\Phi}$ has complex multiplication, we can find a number field $L$ over which $X_{\Phi}$ is defined and has everywhere good reduction \cite{MilneCM}.  Let $\cl{X}_{\Phi}$ be the Neron model of $X_{\Phi}$ defined over $\cl{O}_L$ and let $\epsilon:\text{Spec}(\cl{O}_L) \rightarrow \cl{X}_{\Phi}$ be the zero section.  Let $\omega_{\cl{X}_{\Phi}/\cl{O}_L} = \epsilon^{\ast}(\Lambda^n \Omega_{\cl{X}_{\Phi}/\cl{O}_L})$ and take a non-zero $\alpha \in\omega_{\cl{X}_{\Phi}/\cl{O}_L}$. There are different normalizations of the Faltings height, and we will follow \cite{YangYin} and \cite{Col}. The stable Faltings height of $X_{\Phi}$ is defined by
\[
\hfal(X_{\Phi}):= \frac{-1}{2[L:\bb{Q}]} \sum_{\sigma:L \hookrightarrow \bb{C}} \log \Bigg\lvert \int_{X_{\Phi}^{\sigma}(\bb{C})} \alpha^{\sigma} \wedge \overline{\alpha^{\sigma}}\Bigg\rvert + \log \lvert \omega_{\cl{X}_{\Phi}/\cl{O}_L} / \cl{O}_L \alpha \rvert.
\]
This is independent of choice of $L$ and $\alpha$. We define the Faltings height of a CM type by 
\[
\hfal(\Phi) = \frac{1}{[E:\bb{Q}]} \hfal(X_{\Phi}).
\]
Under this choice of normalization, $\hfal(\Phi)$ does not change if we extend $\Phi$ to the extended CM type on a larger CM field containing $E$.

Given a CM field $E$ and a CM type $\Phi$, Colmez's conjecture is the following equality:
\begin{equation} 
\hfal(\Phi) = -Z(0,A_{\Phi}^0).
\end{equation}

Known cases include when $E$ is abelian \cite{Col}, \cite{Obus}, when $[E:\bb{Q}] = 4$ \cite{Yang}, \cite{Yang10}, \cite{Yang13}, and when the Galois group is as large as possible, so called Weyl CM fields \cite{BSM}.  

Colmez's conjecture also suggests a formula for the average Faltings height amongst abelian varieties with CM by a given CM field $E$.  For $E$ a CM field of degree $2n$, let $\Phi(E)$ denote the set of all CM types of $E$.  The following formula was also conjectured by Colmez in \cite{Col} and was recently proved independently by 2 groups \cite{AGHM},\cite{YZ}:
\begin{equation}
\frac{1}{2^n} \sum_{\Phi \in \Phi(E)}\hfal(\Phi) = -\frac{1}{2^n}\sum_{\Phi \in \Phi(E)} Z(0,A_{\Phi}^0) = -\frac{1}{4n}Z(0,\chi_{E/F}) -\frac{1}{4} \log(2\pi).
\end{equation}
In the above equation, $F$ denotes the totally real subfield of $E$ and $\chi_{E/F}$ is the quadratic character associated to the extension $E/F$.   

\section{Preliminary Set Up}
In this paper we will consider the following set up.  Let $k \subseteq \bb{C}$ be an imaginary quadratic field.  Let $q$ be an odd prime power, and let $F$ be a totally real number field of degree $q+1$ such that, if $F^c$ is the Galois closure we have that 
\[
\Gal(F^c/\bb{Q}) \cong \PSL_2(\bb{F}_q), \quad \quad \Gal(F^c/F) \cong B,
\]
where $B$ is the Borel subgroup in $\PSL_2(\bb{F}_q)$,
\[
B:= \Bigg\{ \begin{bmatrix} a & b \\ 0 & a^{-1} \end{bmatrix} : a\in \bb{F}_q^{\ast}, b \in \bb{F}_q\Bigg\}.
\]
Then $E:=kF$ is a unitary CM field of degree $2(q+1)$ and $E^c = kF^c$ is the Galois closure of $E$ and is also a unitary CM field with $\Gal(E^c/\bb{Q}) \cong \PSL_2(\bb{F}_q) \times \bb{Z}/2\bb{Z}$.

The embeddings $F \hookrightarrow \bb{C}$ can be identified with coset representatives of $\PSL_2(\bb{F}_q)/B$ and we will take the following representatives:
\[
w:= \begin{bmatrix} 0 & 1 \\ -1 & 0 \end{bmatrix} \quad \quad \Bigg\{ n_{-}(i) := \begin{bmatrix}  1 & 0 \\ i & 1 \end{bmatrix}\Bigg\}_{i \in \bb{F}_q}.
\]

Another subgroup that will show up briefly is $U\subseteq B$, the unipotent upper triangular matrices,
\[
U:= \Bigg\{ \begin{bmatrix} 1 & b \\ 0 & 1 \end{bmatrix} :  b \in \bb{F}_q\Bigg\}.
\]

The main theorem of this paper is that the Colmez Conjecture holds in this setting.
\begin{thm}
Let $\Phi$ be a CM type of $E$, and let $\chi_{k/\bb{Q}},\chi_{E/F}$ denote the quadratic characters corresponding to the quadratic extensions $k/\bb{Q}$ and $E/F$ respectively.  Then, 
\begin{align*}
\hfal(\Phi) &= -Z(0,A_{\Phi}^0) \\
&= -\frac{1}{4}Z(0,\zeta_k)+\frac{\epsilon(q+1-\epsilon)}{q(q+1)}Z(0,\chi_{k/\bb{Q}}) - \frac{\epsilon(q+1-\epsilon)}{q(q+1)^2}Z(0,\chi_{E/F}),
\end{align*}
where $\epsilon$ is an integer with $0\leq\epsilon\leq q+1$, related to the signature of $\Phi$, a notion to be defined in the following section.
\end{thm}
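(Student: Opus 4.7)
The plan is to compute the class function $A_\Phi^0$ on $G := \Gal(E^c/\bb{Q}) \cong \PSL_2(\bb{F}_q) \times \bb{Z}/2\bb{Z}$ explicitly, show that in the irreducible-character basis it is supported on only three characters, and then invoke the Yang--Yin refinement of the average formula (2.2) to deduce the height identity. I would begin by parametrizing CM types. Since $E = kF$, a CM type $\Phi$ of $E$ is determined by a choice, for each of the $q+1$ embeddings $F \hookrightarrow \bb{C}$, of one of the two extensions to $E$; identifying these embeddings with cosets of $B$ via the representatives $w$ and $n_-(i)$, a CM type is encoded by a function $\PSL_2(\bb{F}_q)/B \to \{0,1\}$. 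The integer $\epsilon$ in the statement should be the number of cosets on which this function takes the value $1$, and equivalent CM types correspond to orbits of the $B \times \bb{Z}/2\bb{Z}$ action; Section 4 of the paper should establish exactly this.

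Next, I would carry out the group-ring calculation. Exploiting the factorization $G = \PSL_2(\bb{F}_q) \times \langle \rho \rangle$, I would write $\Phi^c$ as an element of $\bb{C}[G]$ split along the two cosets of $\PSL_2(\bb{F}_q)$, and compute the convolution $\Phi^c\widetilde{\Phi^c}$. Projecting onto class functions via $A_\Phi^0 = \frac{1}{|G|}\sum_g gA_\Phi g^{-1}$ reduces the problem to counting pairs of coset representatives $(\gamma_1,\gamma_2)$ with prescribed values of $\Phi$ and prescribed conjugacy class of $\gamma_1\gamma_2^{-1}$. The Bruhat decomposition $\PSL_2(\bb{F}_q) = B \sqcup BwB$ should cleanly separate the contributions.

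With $A_\Phi^0$ in hand, I would expand it against the character table of $\PSL_2(\bb{F}_q) \times \bb{Z}/2\bb{Z}$ (principal series, discrete series, Steinberg, and trivial representation, each tensored with the two characters of $\bb{Z}/2\bb{Z}$). The key claim, to be proved via direct inner-product computation, is that the only characters appearing with nonzero coefficient are (i) the trivial character of $G$, (ii) the sign character of $\bb{Z}/2\bb{Z}$ inflated to $G$, and (iii) the $(q+1)$-dimensional character $\Ind_{\Gal(E^c/F)}^G \chi_{E/F}$. The Artin $L$-functions of these three characters are $\zeta(s)$, $L(s,\chi_{k/\bb{Q}})$, and $L(s,\chi_{E/F})$ respectively, which together assemble into the three $Z$-terms in the theorem via $Z(0,\zeta_k) = Z(0,\mathbf{1}) + Z(0,\chi_{k/\bb{Q}})$, and the stated coefficients should emerge as polynomial expressions in $\epsilon$ and $q$.

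Finally, I would invoke the Yang--Yin refinement. Colmez's conjecture is known for abelian Galois groups \cite{Col,Obus}, which handles the trivial and $\chi_{k/\bb{Q}}$ contributions individually; the $\chi_{E/F}$ contribution is pinned down by the averaged identity (2.2) once one checks that the coefficient of $\chi_{E/F}$ in the class function $\frac{1}{2^n}\sum_\Phi A_\Phi^0$ equals $\frac{1}{4n}$. The main obstacle is the explicit irreducible decomposition in the third step: showing that the principal, discrete, and Steinberg series all drop out and isolating exactly the coefficient $\epsilon(q+1-\epsilon)/q(q+1)^2$ for $\chi_{E/F}$ requires delicate bookkeeping over $B\backslash\PSL_2(\bb{F}_q)/B$ double cosets, which is where the group-theoretic data collected in Section 5 will be indispensable.
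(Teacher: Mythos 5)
Your computational plan (the first three steps) is essentially the paper's own route: write $\Phi^c$ in the group ring $\bb{C}[\Gal(E^c/\bb{Q})]$, convolve with the reflex type, project onto class functions by tracking conjugacy classes of products of coset representatives, and read the answer off the character table of $\PSL_2(\bb{F}_q)$. The paper's Theorem 6.1 indeed produces $A_{\Phi}^0 = \tfrac14\chi_{\Ind_{\Gal(E^c/k)}^{\Gal(E^c/\bb{Q})}(\chi_0)} - \tfrac{\epsilon(q+1-\epsilon)}{q(q+1)}\chi_{k/\bb{Q}} + \tfrac{\epsilon(q+1-\epsilon)}{q(q+1)^2}\chi_{\Ind_{\Gal(E^c/F)}^{\Gal(E^c/\bb{Q})}(\chi_{E/F})}$, depending only on the signature, and inductivity of Artin $L$-functions then gives the three $Z$-terms exactly as you say. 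Two small slips: equivalence of CM types is the orbit under the full group $\PSL_2(\bb{F}_q)\times\bb{Z}/2\bb{Z}$ acting by left translation, not under $B\times\bb{Z}/2\bb{Z}$; and $\Ind_{\Gal(E^c/F)}^{\Gal(E^c/\bb{Q})}(\chi_{E/F})$ is not irreducible (it is $\chi_{k/\bb{Q}}$ plus the Steinberg character twisted by $\chi_{k/\bb{Q}}$), so the irreducible support consists of those constituents. Neither of these affects the calculation.

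The genuine gap is your concluding step. The known abelian cases of Colmez cannot "handle the trivial and $\chi_{k/\bb{Q}}$ contributions individually": $\hfal(\Phi)$ is a single number attached to an abelian variety with CM by the non-abelian field $E$, not a sum of per-character contributions each separately covered by a known case, and the abelian results say nothing about such varieties. Likewise the full averaged formula (2.2) supplies exactly one linear relation among the $2^{q+1}$ heights, while the targets $-Z(0,A_{\Phi}^0)$ genuinely depend on $\epsilon$; checking that the coefficient of $\chi_{E/F}$ in $\frac{1}{2^n}\sum_{\Phi}A_{\Phi}^0$ equals $\frac{1}{4n}$ is only a consistency check and cannot isolate any individual $\hfal(\Phi)$. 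What is actually needed, and what the paper uses, is the Yang--Yin refinement asserting the averaged identity over CM types of a \emph{fixed signature} $(q+1-\epsilon,\epsilon)$, together with two further facts that your proposal omits: by Theorem 6.1 the class function $A_{\Phi}^0$ is constant on each signature class, and by Colmez's Th\'eor\`eme 0.3 the height $\hfal(\Phi)$ depends only on $A_{\Phi}^0$, hence is also constant on that class. Only then do both sides of the fixed-signature average become constant, so the equality of averages collapses to $\hfal(\Phi) = -Z(0,A_{\Phi}^0)$ for each individual $\Phi$. As written, your last paragraph replaces this refinement by the global average plus an inapplicable appeal to the abelian case, and the argument does not go through.
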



\section{Classification of CM Types}
A CM type of $E$ consists of $q+1$ embeddings of $E$ into $\bb{C}$ such that no two of the embeddings are conjugate.  An embedding $E\hookrightarrow\bb{C}$ is uniquely determined by a pair of embeddings $F \hookrightarrow \bb{C}$ and $k \hookrightarrow\bb{C}$.  Therefore the data of a CM type consists of a choice of one of the embeddings of $k \hookrightarrow \bb{C}$ for each $F \hookrightarrow \bb{C}$.  
\begin{defn}
A CM type $\Phi$ of $E$ has signature $(q+1-\epsilon,\epsilon)$ if $q+1-\epsilon$ of the embeddings in $\Phi$ give rise to the identity $k \hookrightarrow \bb{C}$ and the remaining $\epsilon$ of the embeddings in $\Phi$ give rise to the conjugate $k \hookrightarrow\bb{C}$.
\end{defn}
We use the coset representatives $\{w,n_{-}(i)\}_{i \in \bb{F}_q}$ to specify embeddings of $F$ into $\bb{C}$ and $\{1,\rho\}$ for the embeddings of $k$ into $\bb{C}$.  An arbitrary CM type $\Phi$ of $E$ is of the form
\[
\Phi =\rho^{\epsilon_{\infty}} w + \sum_{i \in \bb{F}_q} \rho^{\epsilon_{i}}n_{-}(i),
\]
where $\epsilon_i$ is either 0 or 1. The signature of such a $\Phi$ is $(q+1-\epsilon,\epsilon)$ where $\epsilon = \sum \epsilon_{i}$.

Recall that we are using the isomorphism $\bb{C}[\Gal(E^c/\bb{Q})] \cong \operatorname{Maps}(\Gal(E^c/\bb{Q}),\bb{C})$, as well as identifying $\Gal(E^c/\bb{Q})$ with $\Hom(E^c,\bb{C})$.  Under these correspondences, the element $\rho^{\epsilon_{\infty}}w \in \bb{C}[\Gal(E^c/\bb{Q})]$ represents the embedding of $E$ into $\bb{C}$ which is $w$ on $F$ and the identity on $k$ if $\epsilon_{\infty} = 0$ or the conjugate embedding on $k$ if $\epsilon_{\infty} = 1$.  If $\epsilon_{\infty} = 0$, we will simply write this element as $w$. 

The following lemma provides a limited classification of CM types of $E$.  Parts $(a)$ and $(b)$ of Lemma 4.2 apply to any unitary CM field, while parts $(c)-(e)$ only apply to the specific CM fields considered in this paper.

\begin{lem}
\begin{enumerate}[(a)]
\item There is a bijection between equivalence classes of CM types of signature $(q+1-\epsilon,\epsilon)$ and equivalence classes of CM types of signature $(\epsilon, q+1-\epsilon)$ for any $0 \leq \epsilon \leq q+1$.
\item There is one equivalence class of CM types of signature $(q,1)$.
\item There is one equivalence class of CM types of signature $(q-1,2)$.
\item If $q \equiv 3 \pmod{4}$, there is one equivalence class of CM types of signature $(q-2,3)$.
\item If $q \equiv 1 \pmod{4}$, there are two equivalence classes of CM types of signature $(q-2,3)$.
\end{enumerate}
\end{lem}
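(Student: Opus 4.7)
The plan is to recast the classification as a combinatorial question about orbits of $\PSL_2(\bb{F}_q)$ acting on subsets of the coset space $\PSL_2(\bb{F}_q)/B \cong \mathbb{P}^1(\bb{F}_q)$. Using the product structure $\Gal(E^c/\bb{Q}) = \PSL_2(\bb{F}_q) \times \langle \rho \rangle$ and the fact that $\rho$ acts trivially on $F^c$, I would identify each CM type $\Phi = \rho^{\epsilon_\infty} w + \sum_i \rho^{\epsilon_i} n_-(i)$ with its ``sign set'' $S_\Phi \subseteq \PSL_2(\bb{F}_q)/B$ of cosets on which $\epsilon = 1$. Under this identification, the Galois action becomes: $\PSL_2(\bb{F}_q)$ permutes cosets via its natural action on $\mathbb{P}^1(\bb{F}_q)$, while $\rho$ replaces $S_\Phi$ by its complement. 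Consequently, when $\epsilon < (q+1)/2$, equivalence classes of CM types of signature $(q+1-\epsilon, \epsilon)$ correspond exactly to $\PSL_2(\bb{F}_q)$-orbits on $\epsilon$-element subsets of $\PSL_2(\bb{F}_q)/B$.

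Part (a) is then immediate: complementation $S \mapsto \bar S$, induced by $\rho$, sends signature $(q+1-\epsilon, \epsilon)$ to signature $(\epsilon, q+1-\epsilon)$ and furnishes the desired bijection on equivalence classes. For parts (b) and (c), I would invoke the $2$-transitivity of $\PSL_2(\bb{F}_q)$ on $\mathbb{P}^1(\bb{F}_q)$, which forces a single orbit on singletons and a single orbit on $2$-element subsets.

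For parts (d) and (e), I would exploit the contrast between $\PSL_2(\bb{F}_q)$ and the sharply $3$-transitive group $\operatorname{PGL}_2(\bb{F}_q)$. The $\operatorname{PGL}_2$-stabilizer of any $3$-subset is isomorphic to $S_3$, realized by the six Möbius transformations permuting the three chosen points. Fixing the canonical triple $\{0, 1, \infty\}$, a direct check of the six representing matrices shows that the identity and the two $3$-cycles have determinants that are squares in $\bb{F}_q^*$, while the three involutions have determinant $-1$ modulo squares. Hence the $\PSL_2$-stabilizer is all of $S_3$ precisely when $-1 \in (\bb{F}_q^*)^2$, i.e., when $q \equiv 1 \pmod{4}$, and equals the index-$2$ subgroup $A_3 \cong C_3$ when $q \equiv 3 \pmod{4}$. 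Comparing $|\PSL_2(\bb{F}_q)|/|\text{Stab}|$ with $\binom{q+1}{3}$ then yields exactly $2$ orbits in the former case and a single orbit in the latter, giving (e) and (d) respectively.

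The one nonroutine ingredient is the determinant-in-squares verification behind parts (d) and (e), which is the sole place where the parity of $q$ modulo $4$ enters. Because $\operatorname{PGL}_2(\bb{F}_q)$ acts transitively on $3$-subsets, handling the canonical triple $\{0, 1, \infty\}$ suffices for every $3$-subset; the remainder of the argument is bookkeeping together with the standard 2- and 3-transitivity properties of $\PSL_2$ and $\operatorname{PGL}_2$ on $\mathbb{P}^1(\bb{F}_q)$.
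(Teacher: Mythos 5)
Your proposal is correct, and for parts (a)--(c) it is essentially the paper's argument: both reduce equivalence within a signature to $\PSL_2(\bb{F}_q)$-orbits on $\epsilon$-element subsets of $\PSL_2(\bb{F}_q)/B \cong \bb{P}^1(\bb{F}_q)$, handle (a) by $\rho$ (complementation of the sign set), and handle (b), (c) by transitivity and $2$-transitivity (the paper verifies $2$-transitivity by an explicit coset computation rather than quoting it). For (d) and (e) your route is genuinely different. The paper first uses the diagonal torus to scale the third coset by squares, reducing to the two candidate representatives $\{wB,B,n_{-}(1)B\}$ and $\{wB,B,n_{-}(\Delta)B\}$; it then merges them for $q\equiv 3 \pmod 4$ by exhibiting the element $n_{-}(-1)$, and separates them for $q\equiv 1 \pmod 4$ by computing an explicit order-$6$ stabilizer and counting. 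You instead use sharp $3$-transitivity of $\operatorname{PGL}_2(\bb{F}_q)$: the $\operatorname{PGL}_2$-stabilizer of a triple is $S_3$, and the determinant square-class check on the six Möbius maps fixing $\{0,1,\infty\}$ (identity and $3$-cycles have square determinant, the involutions have determinant $-1$) shows the $\PSL_2$-stabilizer is $S_3$ or $C_3$ according to $q \bmod 4$; orbit counting against $\binom{q+1}{3}$ (together with the index-$2$ bound coming from $\operatorname{PGL}_2$-transitivity) then gives (d) and (e) in one stroke. Your version is more uniform and makes transparent that $q \bmod 4$ is the only input; the paper's version is more hands-on and produces explicit orbit representatives.

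One caveat, which you partially anticipated by restricting your dictionary to $\epsilon < (q+1)/2$: when $\epsilon = (q+1)/2$ the element $\rho$ also acts within the signature, so equivalence classes need not coincide with $\PSL_2$-orbits. This matters exactly for part (e) at $q=5$, where $\epsilon = 3 = (q+1)/2$: there complementation is $\PSL_2(\bb{F}_5)$-equivariant and in fact swaps the two orbits (the square class of $\det(v_a,v_b)\det(v_b,v_c)\det(v_c,v_a)$ distinguishes $\{0,1,\infty\}$ from its complement $\{2,3,4\}$), so the two $\PSL_2$-orbits fuse into a single equivalence class of CM types. Your proof therefore does not cover $q=5$ in (e) -- but neither does the paper's, whose argument has the same implicit restriction (its table also starts at $q=7$); this is a boundary issue of the statement itself rather than a defect peculiar to your approach, though it is worth flagging explicitly.
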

\begin{proof}
Given a CM type $\Phi$ as above, we can extend $\Phi$ to $\Phi^c$ a CM type on $E^c$.  Since $E$ is the fixed field by $B$, 
\[
\Phi^c= \rho^{\epsilon_{\infty}} wB + \sum_{i \in \bb{F}_q} \rho^{\epsilon_{i}}n_{-}(i)B.
\]

Two CM types $\Phi_1$ and $\Phi_2$ of $E$ are equivalent if there is some $g \in \text{Gal}(E^c/\bb{Q})$ such that $g \Phi_1^c = \Phi_2^c$.

Part $(a)$ follows from multiplication by $\rho$.

The subsequent parts of this lemma involve classifying CM types within a given signature $(q+1-\epsilon,\epsilon)$.  This amounts to studying the orbit of $\epsilon$ many cosets in $\PSL_2(\bb{F}_q)/B$ under the action of left multiplication by $\PSL_2(\bb{F}_q)$.

For $(b)$, note that $\PSL_2(\bb{F}_q)$ acts transitively on the elements of $\PSL_2(\bb{F}_q)/B$.

To classify the orbit of 2 element subsets of $\PSL_2(\bb{F}_q)/B$ for part $(c)$, first take some $\{g_1B,g_2B\}$.  By part $(b)$, we can assume that $g_1B = wB$.  Then, there is some $i \in \bb{F}_q$ such that $g_2B = n_{-}(i)B$ and thus $n_{-}(-i) \{wB,n_{-}(i)B\} = \{wB,B\}$.  

Given 3 cosets $\{g_1B,g_2B,g_3B\}$, by part $(c)$ we may assume $g_1B = wB$ and $g_2B = B$.  In that case, $g_3B = n_{-}(i)B$ for some $i \in \bb{F}_q^{\ast}$.  For $x \in \bb{F}_q^{\ast}$, we have
\[
\begin{bmatrix}x^{-1} & 0 \\ 0 & x \end{bmatrix} \{wB,B, n_{-}(i)B\} = \{wB,B,n_{-}(i x^2)B\}.
\]
From this, we see that there are at most 2 equivalence classes of CM types of signature $(q-2,3)$, based on whether or not $i$ is a square.  Representatives for the equivalence classes are given by the following, where $\Delta \in \bb{F}_q^{\ast}$ is a non-square,
\[
\{wB,B,n_{-}(1)B\}, \quad \quad \{wB,B,n_{-}(\Delta)B\}.
\]

If $q \equiv 3 \pmod{4}$, then $-1$ is not a square in $\bb{F}_q$, and the following calculation shows that all of the CM types of signature $(q-2,3)$ are equivalent:
\[
n_{-}(-1) \{wB,B,n_{-}(1)B\} = \{wB, n_{-}(-1)B,B\}.
\] 

Finally for part $(e)$, we need to show that the two CM types are distinct.  A quick calculation shows that, for $q \equiv 1 \pmod{4}$,  the stabilizer of $\{wB,B,n_{-}(1)B\}$ is the following subgroup (note as we are in $\PSL_2(\bb{F}_q)$, this subgroup is independent of choice of square root of $-1$)
\[
\Bigg\{ \begin{bmatrix} 1 & 0 \\ 0 & 1 \end{bmatrix},  \begin{bmatrix} 0 & 1 \\ -1 & 1 \end{bmatrix},  \begin{bmatrix} -1 & 1\\ -1 & 0 \end{bmatrix},  \begin{bmatrix} 0 & \sqrt{-1} \\ \sqrt{-1}  & 0 \end{bmatrix},  \begin{bmatrix} \sqrt{-1}  & 0 \\ \sqrt{-1} & -\sqrt{-1}  \end{bmatrix},  \begin{bmatrix} \sqrt{-1}  & -\sqrt{-1}  \\ 0 & -\sqrt{-1}  \end{bmatrix}\Bigg\}.
\]
\end{proof}

Unfortunately there does not appear to be any further clear patterns regarding the number of CM types of $E$.  The action of $\PSL_2(\bb{F}_q)$ on $\PSL_2(\bb{F}_q)/B$ is the same as the action of $\PSL_2(\bb{F}_q)$ on $\bb{P}^1(\bb{F}_q)$ and we can compute the number of orbits of this action in Magma. The following table shows the number of CM types of a given signature for a number of small prime powers.  From the table, it appears that the number of inequivalent CM types grows rapidly.  

In Section 6 of this paper, we will explicitly compute $A_{\Phi}^0$ for a CM type $\Phi$.  Despite the large number of inequivalent CM types of a given signature, Theorem 6.1 will show that $A_{\Phi}^0$ depends only on the signature of $\Phi$.

\begin{center}
Number of Equivalence classes of CM types of signature $(q+1-\epsilon,\epsilon)$
\end{center}

\begin{center}
\begin{tabular}{| l | l | l | l |l |l |l |l |l |}
    \hline
     \diagbox{$q$}{$\epsilon$}  &1 & 2 & 3 & 4 & 5 & 6 & 7 \\ \hline
     7 &    1& 1 & 1 & 3 & 1 & 1 &1 \\ \hline
    9 &   1 & 1 & 2 & 3 & 4 & 3 & 2 \\ \hline
    11 & 1 & 1 & 1 & 2 & 2 & 6 & 2 \\ \hline
    13 & 1 & 1 & 2 & 4 & 5 & 7 & 10 \\ \hline
    17 & 1 & 1 & 2 & 4 & 8 & 15 & 20 \\ \hline
    19 & 1 & 1 & 1 & 5 & 6 & 19 & 26 \\ \hline
    23 & 1 & 1 & 1 & 5 & 7 & 34 & 57 \\ \hline
    25 & 1 & 1 & 2 & 7 & 16 & 45 & 108 \\ \hline
    27 & 1 & 1 & 1 & 6 & 10 & 54 & 124 \\ \hline
    29  & 1 & 1 & 2 & 6 & 19 & 68 & 194\\ \hline
    31 & 1 & 1 & 1 & 8 & 15 & 83 & 233 \\ \hline
   
    \end{tabular}

\end{center}

\section{Character Theory of $\PSL_2(\bb{F}_q)$}
There are two ways to classify conjugacy classes in $\PSL_2(\bb{F}_q)$ and we will use both.  First, we will classify conjugacy classes in $\SL_2(\bb{F}_q)$. Fix $\Delta \in \bb{F}_q^{\ast}$ a non-square.
\begin{prop}
Let $A,B \in \SL_2(\bb{F}_q)$ such that $\text{tr}(A) = \text{tr}(B) \neq \pm 2$, where $\text{tr}$ denotes the trace of a matrix.  Then $A$ and $B$ are conjugate.
\end{prop}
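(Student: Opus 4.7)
The plan is to first observe that having equal trace gives the matrices the same characteristic polynomial, namely $p(x) = x^2 - \tr(A)x + 1$, and the hypothesis $\tr(A) \neq \pm 2$ means the discriminant $\tr(A)^2 - 4$ is nonzero, so $p(x)$ has distinct roots $\lambda, \lambda^{-1}$. This immediately makes $A$ and $B$ semisimple with equal rational canonical form, hence conjugate in $\GL_2(\bb{F}_q)$. I would split into two cases depending on whether $\tr(A)^2 - 4$ is a square in $\bb{F}_q^{\ast}$ or not; these correspond to $\lambda \in \bb{F}_q^{\ast}$ versus $\lambda \in \bb{F}_{q^2} \setminus \bb{F}_q$.

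The main content, and the only real obstacle, is upgrading $\GL_2(\bb{F}_q)$-conjugacy to $\SL_2(\bb{F}_q)$-conjugacy. For this, I would use the standard fact that if $P^{-1}AP = B$, then the full set of conjugators is the coset $P \cdot C_{\GL_2(\bb{F}_q)}(B)$, so an $\SL_2(\bb{F}_q)$-conjugator exists if and only if the determinant map $\det : C_{\GL_2(\bb{F}_q)}(B) \to \bb{F}_q^{\ast}$ is surjective (one needs an element of determinant $\det(P)^{-1}$).

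In the split case, the minimal polynomial of $B$ equals its characteristic polynomial and factors into distinct linear factors over $\bb{F}_q$, so $\bb{F}_q[B] \cong \bb{F}_q \times \bb{F}_q$ and $C_{\GL_2(\bb{F}_q)}(B) = \bb{F}_q[B]^{\ast}$ is conjugate to the diagonal torus; its determinant map is clearly surjective onto $\bb{F}_q^{\ast}$. In the non-split case, $\bb{F}_q[B] \cong \bb{F}_{q^2}$ and $C_{\GL_2(\bb{F}_q)}(B) \cong \bb{F}_{q^2}^{\ast}$, and under this identification the determinant becomes the norm map $N_{\bb{F}_{q^2}/\bb{F}_q} : \bb{F}_{q^2}^{\ast} \to \bb{F}_q^{\ast}$, which is surjective since $\bb{F}_{q^2}^{\ast}$ is cyclic of order $q^2 - 1$ and the norm has image of index $1$.

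The hardest step to articulate cleanly is identifying the centralizer in the non-split case with $\bb{F}_{q^2}^{\ast}$ and matching determinant with norm; I would handle this by choosing an explicit conjugate of $B$ of the form $\begin{bmatrix} 0 & -1 \\ 1 & \tr(A) \end{bmatrix}$ (companion matrix) and verifying the centralizer directly, or alternatively by invoking that any commutative semisimple subalgebra of $M_2(\bb{F}_q)$ of dimension $2$ is either $\bb{F}_q \times \bb{F}_q$ or $\bb{F}_{q^2}$, with the determinant on units realizing the respective norm maps. Everything else is routine linear algebra over finite fields.
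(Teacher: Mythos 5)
Your proof is correct, but it takes a genuinely different route from the paper. The paper also starts from the common characteristic polynomial with distinct roots, but then diagonalizes over $\overline{\bb{F}_q}$ and finishes by asserting that matrices of $\SL_2(\bb{F}_q)$ conjugate in $\SL_2(\overline{\bb{F}_q})$ are conjugate in $\SL_2(\bb{F}_q)$; you instead stay over $\bb{F}_q$, get $\operatorname{GL}_2(\bb{F}_q)$-conjugacy from the rational canonical form, and then upgrade to $\SL_2(\bb{F}_q)$-conjugacy by showing $\det$ is surjective on the centralizer $\bb{F}_q[B]^{\ast}$ (split torus in one case, $\bb{F}_{q^2}^{\ast}$ with $\det$ equal to the norm in the other). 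Your version is more self-contained, and in fact it supplies exactly the justification the paper's last step needs: descent of conjugacy from $\SL_2(\overline{\bb{F}_q})$ to $\SL_2(\bb{F}_q)$ is \emph{not} true without the distinct-eigenvalue hypothesis --- the two unipotent classes $\begin{bmatrix} 1 & 1 \\ 0 & 1 \end{bmatrix}$ and $\begin{bmatrix} 1 & \Delta \\ 0 & 1 \end{bmatrix}$ are conjugate over the closure but not over $\bb{F}_q$, which is precisely why the paper treats trace $\pm 2$ separately in Proposition 5.3. What the hypothesis buys is that the centralizer is a torus, and your surjectivity-of-determinant argument (equivalently, Lang's theorem for that torus) is the honest reason the descent works; the paper's route is shorter but leans on that unproved fact, while yours is elementary and complete at the cost of the explicit centralizer computation.
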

\begin{proof}
Since $A$ and $B$ have the same trace, determinant 1, and are $2 \times 2$ matrices, they have the same characteristic polynomial.  This polynomial cannot have repeated roots, as the trace is not equal to $\pm 2$.  Call this polynomial $f(T)$ and write $f(T) = (T-\lambda_1)(T-\lambda_2)$ with $\lambda_1,\lambda_2 \in \overline{\bb{F}_q}$ and $\lambda_1 \neq \lambda_2$.
Since $\lambda_1$ and $\lambda_2$ are distinct, both $A$ and $B$ are conjugate in $\SL_2(\overline{\bb{F}_q})$ to $\begin{bmatrix} \lambda_1 & 0 \\ 0 & \lambda_2 \end{bmatrix}$.  Two matrices in $\SL_2(\bb{F}_q)$ which are conjugate in $\SL_2(\overline{\bb{F}_q})$ must be conjugate in $\SL_2(\bb{F}_q)$.

\end{proof}

For matrices of trace $\pm 2$, we use the following two propositions.
\begin{prop}
Let $A = \begin{bmatrix} a & b \\ c& d \end{bmatrix} \in \SL_2(\bb{F}_q)$ with $\text{tr}(A) = \delta2$ with $\delta \in \{\pm 1\}$.
\begin{enumerate}[(a)]
\item If $b = 0$, then $A$ is conjugate to $\begin{bmatrix} \delta & -c \\ 0 & \delta \end{bmatrix}$.
\item If $b \neq 0$, then $A$ is conjugate to $\begin{bmatrix} \delta & b \\ 0 & \delta \end{bmatrix}$.
\end{enumerate}
\end{prop}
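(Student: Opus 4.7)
The plan is to exploit the constraint that trace and determinant together put on such an $A$: since $A$ has characteristic polynomial $(T - \delta)^2$, Cayley--Hamilton gives $(A - \delta I)^2 = 0$, which written out in entries amounts to the identity $(a - \delta)^2 = -bc$ (coming from $a + d = 2\delta$ together with $ad - bc = 1$). This is the only algebraic input needed; the proof then reduces to producing an explicit conjugator in $\SL_2(\bb{F}_q)$ in each case.

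For part (a), $b = 0$ forces $(a-\delta)^2 = 0$, so $a = d = \delta$ and $A = \begin{bmatrix} \delta & 0 \\ c & \delta \end{bmatrix}$. I would take $W = \begin{bmatrix} 0 & 1 \\ -1 & 0 \end{bmatrix} \in \SL_2(\bb{F}_q)$; a direct multiplication gives $W A W^{-1} = \begin{bmatrix} \delta & -c \\ 0 & \delta \end{bmatrix}$, which is the desired form.

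For part (b), since $b \in \bb{F}_q^{\ast}$ I would set $t = (\delta - a)/b$ and conjugate by the unipotent lower-triangular matrix $P = \begin{bmatrix} 1 & 0 \\ t & 1 \end{bmatrix}$, which is automatically in $\SL_2(\bb{F}_q)$. Carrying out the multiplication $P^{-1} A P$ with $d = 2\delta - a$, one finds diagonal entries both equal to $\delta$ and upper-right entry $b$ by direct cancellation; the lower-left entry collapses to $c + \frac{(\delta - a)^2}{b}$, and invoking $(a-\delta)^2 = -bc$ turns this into $c - c = 0$. Conceptually, $P$ is just the change of basis that moves the kernel line of the nilpotent part $A - \delta I$ onto the first coordinate axis.

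The main thing to monitor is the determinant-$1$ constraint: in $\SL_2$ one cannot freely rescale basis vectors, so one must arrange things so that the chosen conjugator is automatically unimodular. Both conjugators above are visibly of determinant $1$, so this ``obstacle'' dissolves by the specific choice of $W$ and $P$; there is no deeper difficulty in the proof.
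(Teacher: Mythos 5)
Your proof is correct and is essentially the paper's argument: part (a) uses the same conjugator $\begin{bmatrix} 0 & 1 \\ -1 & 0 \end{bmatrix}$, and in part (b) your unipotent conjugator with $t=(\delta-a)/b$ coincides (up to inversion, since $a-d=2(a-\delta)$) with the paper's $\begin{bmatrix} 1 & 0 \\ \frac{a-d}{2b} & 1\end{bmatrix}$. The only cosmetic difference is that you justify the cancellation via $(a-\delta)^2=-bc$ from Cayley--Hamilton, whereas the paper simply displays the matrix product.
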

\begin{proof}
If $b = 0$, then we have $ad = 1$ and $a+d = \delta 2$, and so $a = d = \delta$.  Then,
\[
\begin{bmatrix} 0 & 1 \\ -1 & 0\end{bmatrix} \begin{bmatrix} \delta & 0 \\ c & \delta \end{bmatrix} \begin{bmatrix} 0 & -1 \\ 1 & 0 \end{bmatrix} = \begin{bmatrix}\delta & -c \\ 0 & \delta \end{bmatrix}.
\]

If $b \neq 0$, then
\[
\begin{bmatrix} 1 & 0 \\ \frac{a-d}{2b} & 1\end{bmatrix} \begin{bmatrix} a & b \\ c & d \end{bmatrix} \begin{bmatrix} 1 & 0 \\ -\frac{a-d}{2b} & 1 \end{bmatrix} = \begin{bmatrix}\delta & b \\ 0 & \delta \end{bmatrix}.
\]
\end{proof}
\begin{prop}
For $\delta \in \{\pm 1\}$ and $x,y \in \bb{F}_q$, the two matrices $\begin{bmatrix} \delta & x \\ 0 & \delta \end{bmatrix}$ and $\begin{bmatrix} \delta & y \\ 0& \delta\end{bmatrix}$ are conjugate (in $\SL_2(\bb{F}_q)$) if and only if either of the following occurs:
\begin{enumerate}[(a)]
\item $x=y=0$, or
\item $x,y \in \bb{F}_q^{\ast}$ and $x \equiv y$ in $\bb{F}_q^{\ast}/(\bb{F}_q^{\ast})^2$.
\end{enumerate}
\end{prop}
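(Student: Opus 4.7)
The plan is to reduce everything to the nilpotent part. Write the matrix in question as $\delta I + N_x$, where $N_x = \begin{bmatrix} 0 & x \\ 0 & 0 \end{bmatrix}$. Since $\delta I$ is central in $\SL_2(\bb{F}_q)$, for any $g$ one has $g(\delta I + N_x)g^{-1} = \delta I + g N_x g^{-1}$, so the two matrices in the statement are conjugate in $\SL_2(\bb{F}_q)$ if and only if $N_x$ and $N_y$ are. This reduction immediately takes care of case (a): if $x = 0$ then $N_x = 0$ so $g N_x g^{-1} = 0$ forces $y = 0$.

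For the remaining range $x,y \in \bb{F}_q^{\ast}$, I would write $g = \begin{bmatrix} a & b \\ c & d \end{bmatrix}$ with $ad - bc = 1$, so that $g^{-1} = \begin{bmatrix} d & -b \\ -c & a \end{bmatrix}$, and compute directly:
\[
g N_x g^{-1} = x\begin{bmatrix} -ac & a^2 \\ -c^2 & ac \end{bmatrix}.
\]
Setting this equal to $N_y$ forces $c = 0$ (from the lower-left entry), hence $ac = 0$ is automatic, $ad = 1$, and the upper-right entry gives $y = a^2 x$. In particular $y/x$ is a nonzero square, i.e.\ $x \equiv y$ in $\bb{F}_q^{\ast}/(\bb{F}_q^{\ast})^2$.

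For the converse, if $y = a^2 x$ with $a \in \bb{F}_q^{\ast}$, take $g = \begin{bmatrix} a & 0 \\ 0 & a^{-1} \end{bmatrix} \in \SL_2(\bb{F}_q)$ and verify by a one-line multiplication that $g(\delta I + N_x)g^{-1} = \delta I + N_{a^2 x} = \delta I + N_y$. This exhibits the conjugation explicitly and completes the proof.

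There is no real obstacle here beyond the direct matrix arithmetic; the only mild subtlety is ensuring the conjugating element actually lies in $\SL_2(\bb{F}_q)$ (as opposed to $\operatorname{GL}_2$), which is why the diagonal conjugator is chosen to be $\operatorname{diag}(a, a^{-1})$ rather than $\operatorname{diag}(a, 1)$, and why only squares (not arbitrary nonzero scalars) appear on the right-hand side.
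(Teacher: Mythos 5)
Your proposal is correct and follows essentially the same route as the paper: both compute the general conjugate $g\begin{bmatrix}\delta & x\\ 0 & \delta\end{bmatrix}g^{-1}=\begin{bmatrix}\delta-acx & a^{2}x\\ -c^{2}x & \delta+acx\end{bmatrix}$ and read off $y=a^{2}x$, your preliminary splitting off of the central part $\delta I$ and the explicit diagonal conjugator for the converse being only minor repackaging of the same computation.
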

\begin{proof}
The case $x = y =0$ is clear, so assume $x,y \in \bb{F}_q^{\ast}$.  
An arbitrary conjugate of $\begin{bmatrix} \delta & x \\ 0 & \delta \end{bmatrix}$ is of the following form:
\[
\begin{bmatrix} a & b \\ c & d\end{bmatrix} \begin{bmatrix} \delta & x \\ 0 & \delta \end{bmatrix} \begin{bmatrix} d & -b \\ -c & a \end{bmatrix} = \begin{bmatrix}\delta-acx & a^2x \\ -c^2x & \delta+acx \end{bmatrix}.
\]
For this to be equal to $\begin{bmatrix} \delta & y \\ 0 & \delta \end{bmatrix}$, we must have $a^2x = y$.
\end{proof}

In $\PSL_2(\bb{F}_q)$, trace is only well defined as an element of the set $\bb{F}_q/\{\pm\}$.  Combining the above ideas leads to the following classification of conjugacy classes in $\PSL_2(\bb{F}_q)$.
\begin{prop}
The conjugacy classes of $\PSL_2(\bb{F}_q)$ are as follows.
\begin{itemize}
\item $\operatorname{Id}:$ This is the identity matrix.
\item $\tr^{\square}(2):$ These are the matrices conjugate to $\begin{bmatrix} 1 & a^2 \\ 0 & 1 \end{bmatrix} $ for any $a \in \bb{F}_q^{\ast}$.
\item $\tr^{\not\square}(2):$ If $\Delta$ is a fixed non-square in $\bb{F}_q^{\ast}$, then these are the matrices conjugate to $\begin{bmatrix} 1 & \Delta a^2 \\ 0 & 1 \end{bmatrix} $ for any $a  \in \bb{F}_q^{\ast}$. 
\item $\tr(x):$ For $x \in \bb{F}_q\setminus\{\pm 2\}$, these are the matrices of trace equal to $\pm x$.
\end{itemize}
\end{prop}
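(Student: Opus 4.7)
The plan is to lift to $\SL_2(\bb{F}_q)$ and combine the $\SL_2$ classification from Propositions 5.1--5.3 with the surjection $\SL_2(\bb{F}_q) \to \PSL_2(\bb{F}_q)$, whose kernel is the central subgroup $\{\pm I\}$. Since the center lies in every centralizer, two elements of $\SL_2(\bb{F}_q)$ have conjugate images in $\PSL_2(\bb{F}_q)$ if and only if they lie in a common orbit under the combined action of $\SL_2$-conjugation together with the sign involution $M \mapsto -M$. Consequently the conjugacy classes of $\PSL_2(\bb{F}_q)$ correspond bijectively to $\SL_2(\bb{F}_q)$-conjugacy classes modulo pairing each class $C$ with $-C$.

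First I would enumerate the $\SL_2$-conjugacy classes using the previous propositions: $\{I\}$, $\{-I\}$, two classes of non-identity trace-$2$ unipotents distinguished by whether the off-diagonal entry is a square (by Propositions 5.2 and 5.3), two analogous classes of trace $-2$, and one class for every trace $x \in \bb{F}_q \setminus \{\pm 2\}$ (by Proposition 5.1). I would then track how these classes are identified under $M \mapsto -M$, noting that this map changes the trace by a sign. So $\{I\}$ and $\{-I\}$ fuse into $\operatorname{Id}$; for each $x \in \bb{F}_q \setminus \{\pm 2\}$ the trace-$x$ and trace-$(-x)$ classes fuse into a single $\PSL_2$-class $\tr(x)$; and each trace-$2$ unipotent class fuses with exactly one trace-$(-2)$ class, yielding at most two $\PSL_2$-classes of non-identity unipotent elements. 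Picking the trace-$+2$ lift in each of these fused classes gives the labels $\tr^{\square}(2)$ and $\tr^{\not\square}(2)$.

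The one genuine thing to verify is that these two unipotent $\PSL_2$-classes are actually distinct. The representatives $\begin{bmatrix} 1 & 1 \\ 0 & 1 \end{bmatrix}$ and $\begin{bmatrix} 1 & \Delta \\ 0 & 1 \end{bmatrix}$ are not $\SL_2$-conjugate by Proposition 5.3, and the only remaining way for them to become $\PSL_2$-conjugate would be for one to be $\SL_2$-conjugate to the negative of the other; but those negatives have trace $-2 \neq 2$, so this cannot happen. I expect the chief bookkeeping difficulty to lie exactly here, in the unipotent case---in particular in pinning down the pairing of trace-$2$ and trace-$(-2)$ classes, which depends on whether $-1$ is a square in $\bb{F}_q$. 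Everywhere else the argument is a routine descent from the $\SL_2(\bb{F}_q)$ classification.
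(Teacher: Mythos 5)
Your proposal is correct and matches the paper's (implicit) argument: the paper proves this proposition simply by ``combining'' Propositions 5.1--5.3 with the quotient $\SL_2(\bb{F}_q) \to \PSL_2(\bb{F}_q)$, which is exactly the descent-modulo-$\{\pm I\}$ bookkeeping you carry out, including the key check that the two unipotent classes stay distinct because a trace $-2$ matrix cannot be $\SL_2$-conjugate to a trace $2$ matrix.
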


Following \cite{Adams}, we can also think of these conjugacy classes slightly differently and we will occasionally switch between the two viewpoints.  In particular, we will classify the matrices of trace $\pm 2$ exactly the same, but we will split up the other matrices slightly differently.
\begin{prop}
The conjugacy classes of $\PSL_2(\bb{F}_q)$ are as follows.
\begin{itemize}
\item $\operatorname{Id}$ 
\item $\tr^{\square}(2)$
\item $\tr^{\not\square}(2)$
\item $\tr(0)$
\item $\tr(x+x^{-1}):$ These are the matrices conjugate to $\begin{bmatrix} x & 0 \\ 0 & x^{-1} \end{bmatrix}$ for some $x \in \bb{F}_q^{\ast}$ such that $x \neq \pm 1$, and if $q \equiv 1 \pmod{4}$ we also require $x \neq \pm \sqrt{-1}$.
\item $\tr(2x):$ These are the matrices conjugate to $\begin{bmatrix} x & \Delta y \\ y & x \end{bmatrix}$ where $z:=x+ \sqrt{\Delta} y$ is a norm 1 element of $\bb{F}_{q^2}^{\ast}$ such that $z \neq \pm 1$, and if $q \equiv 3 \pmod{4}$ we also require $z \neq \pm\sqrt{\Delta}$.
\end{itemize}
\end{prop}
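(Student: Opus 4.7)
The plan is to derive this classification as a refinement of the preceding proposition by subdividing the family $\tr(x)$ with $x\in\bb{F}_q\setminus\{\pm 2\}$ according to whether the eigenvalues of a representative matrix lie in $\bb{F}_q$ or in $\bb{F}_{q^2}\setminus\bb{F}_q$, and then separating out $\tr(0)$ for uniformity of the exclusion conditions.

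Starting from a matrix $A\in\SL_2(\bb{F}_q)$ with $\tr(A)\neq\pm 2$, its characteristic polynomial $T^2-\tr(A)T+1$ has two distinct roots $\lambda,\lambda^{-1}\in\overline{\bb{F}_q}$. If $\lambda\in\bb{F}_q^{\ast}$, then $A$ is conjugate in $\SL_2(\bb{F}_q)$ to $\begin{bmatrix}\lambda & 0\\ 0 & \lambda^{-1}\end{bmatrix}$ by the proposition on matrices of trace $\neq\pm 2$; writing $x=\lambda$ gives the $\tr(x+x^{-1})$ family. Otherwise $\lambda\in\bb{F}_{q^2}\setminus\bb{F}_q$ and Frobenius sends $\lambda$ to $\lambda^{-1}$, so $\lambda$ has norm $1$ from $\bb{F}_{q^2}$ to $\bb{F}_q$; writing $\lambda=x+\sqrt{\Delta}y$, the norm-$1$ condition becomes $x^2-\Delta y^2=1$, the trace is $2x$, and $\begin{bmatrix}x & \Delta y\\ y & x\end{bmatrix}$ is a concrete representative with these invariants, again conjugate to $A$ by the same proposition.

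To descend to $\PSL_2(\bb{F}_q)$, one identifies each matrix with its negative. For traces $\neq\pm 2$ this is the only new identification, and it amounts to merging the parameters $\{x,-x,x^{-1},-x^{-1}\}$ in the diagonal family and $\{z,-z,z^{-1},-z^{-1}\}$ in the norm-$1$ family. The exclusions $x\neq\pm 1$ and $z\neq\pm 1$ eliminate the three classes of trace $\pm 2$ already listed as Id, $\tr^{\square}(2)$, $\tr^{\not\square}(2)$, which were covered by the preceding propositions on unipotent conjugacy.

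The main obstacle is the bookkeeping for the $\tr(0)$ class. When $q\equiv 1\pmod 4$, the eigenvalues $\pm\sqrt{-1}$ lie in $\bb{F}_q^{\ast}$, so $\tr(0)$ falls into the diagonal family at $x=\sqrt{-1}$ and must be excluded there via $x\neq\pm\sqrt{-1}$. When $q\equiv 3\pmod 4$, one may take $\Delta=-1$ (since $-1$ is then a non-square), and $\sqrt{\Delta}=\sqrt{-1}$ is a norm-$1$ element of $\bb{F}_{q^2}^{\ast}$ with trace $0$, so $\tr(0)$ falls into the norm-$1$ family and must be excluded there via $z\neq\pm\sqrt{\Delta}$. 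A final verification that these restrictions make the six lines of the proposition both exhaustive and disjoint completes the argument.
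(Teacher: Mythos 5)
Your argument is correct, but it is worth noting that the paper itself offers no proof of this proposition: it is presented as a repackaging of the previous classification (Propositions 5.1--5.4), deferring to \cite{Adams}, with the only new content being which of the classes $\tr(x)$, $x\neq\pm 2$, meet the split torus and which meet the non-split torus. Your write-up supplies exactly the missing derivation: you use Proposition 5.1 to reduce a trace-$t$ class ($t\neq\pm 2$) to the question of whether $T^2-tT+1$ splits over $\bb{F}_q$, produce the representatives $\begin{bmatrix}x&0\\0&x^{-1}\end{bmatrix}$ and $\begin{bmatrix}x&\Delta y\\ y&x\end{bmatrix}$ accordingly (with $N_{\bb{F}_{q^2}/\bb{F}_q}(\lambda)=\lambda\cdot\lambda^q=1$ giving the norm-one condition), and then observe that conjugacy in $\PSL_2(\bb{F}_q)$ is conjugacy in $\SL_2(\bb{F}_q)$ up to sign, which merges the parameters $\{x,-x,x^{-1},-x^{-1}\}$ resp.\ $\{z,-z,z^{-1},-z^{-1}\}$ and nothing more. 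Your handling of $\tr(0)$ matches the paper's intent, with one small caveat: the paper fixes one non-square $\Delta$ for all of Section 5, so rather than re-choosing $\Delta=-1$ when $q\equiv 3\pmod 4$, it is cleaner to say that the trace-zero norm-one elements are the $z$ with $z^2=-1$, i.e.\ $z=\pm y\sqrt{\Delta}$ with $\Delta y^2=-1$ (which exist precisely when $q\equiv 3\pmod 4$, and coincide with $\pm\sqrt{\Delta}$ exactly when $\Delta=-1$); this is evidently what the exclusion $z\neq\pm\sqrt{\Delta}$ is meant to encode. The concluding ``exhaustive and disjoint'' check you defer is immediate from your setup, since trace up to sign is a $\PSL_2$-invariant and a given $t\neq 0,\pm2$ lands in exactly one of the two families according to whether $T^2-tT+1$ splits.
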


Aside from a few exceptions, most of the conjugacy classes are related to the split torus, $B$, or the non-split torus, the norm 1 elements of $\bb{F}_{q^2}^{\ast}$ considered as a subgroup of $\PSL_2(\bb{F}_q)$.  Based on whether $q \equiv 1 \text{ or } 3 \pmod{4}$, the trace 0 conjugacy class would technically fall into one of the above classifications, but this conjugacy class behaves differently and we consider it separately.  The irreducible characters follow a similar pattern. Most irreducible characters come from characters of the split torus or the non-split torus, with a few exceptions based on whether $q \equiv 1 \text{ or } 3 \pmod{4}$.

\begin{prop}[\cite{Adams}] The following is a complete list of the irreducible characters of $\PSL_2(\bb{F}_q)$.
\begin{itemize}
\item $\chi_0:$ The trivial character.
\item $\chi_1:$ The induced representation $\Ind_{B}^{\PSL_2(\bb{F}_q)}(\chi_0)$ decomposes into a direct sum of the trivial representation and an irreducible $q$ dimensional representation with character $\chi_1$.
\item $\{\chi_{\alpha}\}_{\alpha}:$ Let $\alpha$ be a non-quadratic character of $\bb{F}_q^{\ast}/\{\pm\} \cong B/U$ and consider $\alpha:B \rightarrow \bb{C}$.  Then $\chi_{\alpha}$ is the character of the induced representation $\Ind_B^{\PSL_2(\bb{F}_q)}(\alpha)$.  We have that $\chi_{\alpha} = \chi_{\beta}$ if and only if $\beta = \alpha^{-1}$.
\item $\{\pi(\eta)\}_{\eta}:$ Let $\bb{E}_1$ denote the norm 1 elements of $\bb{F}_{q^2}^{\ast}/\{\pm\}$ considered as a subgroup of $\PSL_2(\bb{F}_q)$. Let $\eta$ be a non-quadratic character of $\bb{E}_1$.  Then, $\pi(\eta)$ is the character of the cuspidal representation. We have that $\pi(\eta) = \pi(\theta)$ if and only if $\theta = \eta^{-1}$.
\item $\{\omega_e^+,\omega_e^-\}:$ The oscillator representations for $q \equiv 1 \pmod{4}$.  Let $\alpha_0$ be the non-trivial quadratic character of $B/U$.  Then $\Ind_B^{\PSL_2(\bb{F}_q)}(\alpha_0)$ decomposes into two irreducible representations, whose characters are given by $\omega_e^{+}$ and $\omega_e^{-}$.
\item $\{\omega_o^+,\omega_o^-\}:$ The oscillator representations for $q \equiv 3 \pmod{4}$. Let $\eta_0$ be the non-trivial quadratic character of $\bb{E}_1$.  Then, the cuspidal representation associated to $\eta_0$ decomposes into two irreducible representations, whose characters are given by $\omega_o^+$ and $\omega_o^-$.

\end{itemize}

\end{prop}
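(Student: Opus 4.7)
The strategy is to build all irreducibles explicitly from two natural families of induced representations --- the principal series from the Borel $B$, and the cuspidal series from the non-split torus $\bb{E}_1$ --- and then confirm completeness by matching the total against the conjugacy class count from Proposition~5.5 together with the sum-of-squares identity $\sum_{\chi} \chi(1)^2 = |\PSL_2(\bb{F}_q)| = q(q^2-1)/2$.

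First I would handle the principal series. For any character $\chi$ of $B$ trivial on $U$, i.e.\ a character of $B/U \cong \bb{F}_q^{\ast}/\{\pm 1\}$, form $\Ind_B^{\PSL_2(\bb{F}_q)}\chi$, of dimension $q+1$. Using the Bruhat decomposition $\PSL_2(\bb{F}_q) = B \sqcup BwB$ and Mackey's intertwining formula, one computes
\[
\langle \Ind \chi,\,\Ind\chi'\rangle \;=\; \delta_{\chi,\chi'} + \delta_{\chi,{\chi'}^{-1}},
\]
so $\Ind \chi$ is irreducible iff $\chi \neq \chi^{-1}$, and $\Ind \chi \cong \Ind \chi'$ iff $\chi' \in \{\chi,\chi^{-1}\}$. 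The non-quadratic characters yield the family $\{\chi_\alpha\}$ with the stated identification. For trivial $\chi$, the induced representation contains the trivial subrepresentation and splits as $\chi_0 \oplus \chi_1$ with $\chi_1$ of dimension $q$. For the unique nontrivial quadratic character of $B/U$ (which exists precisely when $(q-1)/2$ is even, i.e.\ $q \equiv 1 \pmod 4$), the induced representation is reducible and decomposes into two self-dual pieces of dimension $(q+1)/2$, namely $\omega_e^+$ and $\omega_e^-$.

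Next I would construct the cuspidal representations via the Weil (oscillator) realization of $\SL_2(\bb{F}_q)$ on functions $\bb{F}_{q^2} \to \bb{C}$. Decomposing this space under characters $\eta$ of the non-split torus $\bb{E}_1$ and descending to $\PSL_2$ gives, for each non-quadratic $\eta$, an irreducible cuspidal $\pi(\eta)$ of dimension $q-1$, with $\pi(\eta) \cong \pi(\theta)$ iff $\theta \in \{\eta,\eta^{-1}\}$. When $q \equiv 3 \pmod 4$, so that $(q+1)/2$ is even and a nontrivial quadratic character $\eta_0$ of $\bb{E}_1$ exists, the associated cuspidal piece is reducible and splits as $\omega_o^+ \oplus \omega_o^-$, each of dimension $(q-1)/2$. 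Summing dimensions and counting inequivalent representations in both residue cases $q \equiv 1,3 \pmod 4$ matches the conjugacy class count from Proposition~5.5 exactly, which establishes completeness and rules out further irreducibles.

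The main obstacle, and the delicate point requiring the most care, is the cuspidal construction --- in particular, verifying the splitting of the reducible cuspidal representation into the oscillator pair $\omega_o^{\pm}$ and pinning down their dimensions. The Weil representation naturally lives on the metaplectic double cover of $\SL_2$, so descending from $\SL_2$ to $\PSL_2$ requires tracking exactly which characters factor through $-I$, and the parity of $(q \pm 1)/2$ dictates on which side of the classification the quadratic anomaly appears. Once this descent is executed cleanly and explicit character values on each conjugacy class are computed, orthogonality relations confirm irreducibility and inequivalence, completing the classification.
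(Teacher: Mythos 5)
The paper offers no proof of this proposition: it is quoted directly from \cite{Adams}, whose derivation is essentially the route you outline (principal series analyzed via the Bruhat decomposition and Mackey's formula, cuspidal representations built from the Weil representation on functions on $\bb{F}_{q^2}$, and completeness checked by matching the count of conjugacy classes), so your sketch is the standard argument and is correct in outline. The one point you assert rather than derive --- that the reducible principal series $\Ind_B^{\PSL_2(\bb{F}_q)}(\alpha_0)$ (for $q \equiv 1 \pmod 4$) and the reducible cuspidal piece attached to $\eta_0$ (for $q \equiv 3 \pmod 4$) split into two constituents of \emph{equal} dimension $\tfrac{q+1}{2}$, respectively $\tfrac{q-1}{2}$ --- does not follow from the norm-$2$ Mackey computation alone, which only gives two inequivalent constituents; it is exactly what the explicit character values you defer to (or an outer-automorphism/Weil-representation argument) must supply, as is done in the cited reference.
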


Reproduced here is the character table for $\PSL_2(\bb{F}_q)$ from \cite{Adams} with a slight change in notation to match the notation of this paper.  

A few notes on these tables.  First suppose $q \equiv 3 \pmod{4}$.  Let $\eta_0$ be the unique non-trivial quadratic character of $\bb{E}_1/\{\pm\}$, For the conjugacy class of norm 1 elements $\begin{bmatrix} x & \Delta y \\ y & x\end{bmatrix}$, we write $z = x+\sqrt{\Delta}y$ with $z\in \bb{E}_1/\{\pm\}$, $z \neq \pm 1, \pm \Delta$ and we only take one of $z$ or $z^{-1}$.  For the conjugacy class of matrices with eigenvalues defined over $\bb{F}_q$, we write $\begin{bmatrix} x & 0 \\ 0 & x^{-1} \end{bmatrix}$ with $x \in \bb{F}_q^{\ast}/\{\pm\}$, $x \neq \pm 1$ and we only take one of $x$ or $x^{-1}$.

 When we write $\alpha$, we range over all non-quadratic characters of $\bb{F}_q^{\ast}/\{\pm\}$ but we only take one of $\{\alpha, \alpha^{-1}\}$.  When we write $\eta$, we range over all non-quadratic characters of $\bb{E}_1/\{\pm\}$ where we only take one of $\{ \eta,\eta^{-1}\}$.  We also list the size of each conjugacy class.
 \begin{center}
Character Table of $\PSL_2(\bb{F}_q)$ where $q \equiv 3 \pmod{4}$
\def\arraystretch{1.5}
\begin{tabular}{| c | c | c | c | c | c | c | c |}
\hline
 &$\operatorname{Id}$ & $\begin{bmatrix} 1 & 1 \\ 0 & 1\end{bmatrix}$ &$\begin{bmatrix} 1 & \Delta \\ 0 & 1\end{bmatrix}$ &$\begin{bmatrix} x & 0 \\ 0 & x^{-1} \end{bmatrix}$ & $\begin{bmatrix} x & \Delta y \\ y & x\end{bmatrix}$ &$\tr0$ \\[1ex] 
  \hline
 Size & 1 & $\frac{q^2-1}{2}$ & $\frac{q^2-1}{2}$ & $q(q+1)$ & $q(q-1)$ & $\frac{q(q-1)}{2}$ \\[1ex]
  \hline
  $\chi_0$  & 1 & 1 & 1 & 1 & 1 & 1\\[1ex]
  \hline
  $\chi_1$ & $q$ & 0 & 0  & 1 & -1 & 1\\[1ex]
  \hline
  $\chi_{\alpha}$ & $q+1$& 1 & 1 & $\alpha(x) + \alpha(x^{-1})$ & 0 & 0 \\[1ex]
  \hline
$\pi(\eta)$ & $q-1$ & $-1$ & $-1$ & 0 & $-\eta(z) - \eta(z^{-1})$ & $-2\eta(\sqrt{\Delta}) $\\[1ex]
\hline
$\omega_o^{\pm}$ & $\frac{q-1}{2}$ & $\frac{1}{2}(-1 \pm \sqrt{-q})$ & $\frac{1}{2}(-1 \mp \sqrt{-q}) $& $0$ & $-\eta_0(z)$& $-\eta_0(\sqrt{\Delta})$\\[1ex]
\hline
\end{tabular}
\end{center}

Here are the tables for $q \equiv 1 \pmod{4}$.  We use similar conventions as the previous case, with slight differences.  In looking at matrices of the form $\begin{bmatrix} x & 0 \\ 0 & x^{-1} \end{bmatrix}$, we exclude $x = \pm 1$ and $x = \pm \sqrt{-1}$. Also, $\alpha_0$ is the quadratic character of $\bb{F}_q^{\ast}/\{\pm\}$.

\begin{center}
Character Table of $\PSL_2(\bb{F}_q)$ where $q \equiv 1 \pmod{4}$
\def\arraystretch{1.5}
\begin{tabular}{| c | c | c | c | c | c | c | c |}
\hline
 &$\operatorname{Id}$ & $\begin{bmatrix} 1 & 1 \\ 0 & 1\end{bmatrix}$ &$\begin{bmatrix} 1 & -1 \\ 0 & 1\end{bmatrix}$ &$\begin{bmatrix} x & 0 \\ 0 & x^{-1} \end{bmatrix}$ & $\begin{bmatrix} x & \Delta y \\ y & x\end{bmatrix}$ &$\tr0$ \\[1ex] 
  \hline
 Size & 1 & $\frac{q^2-1}{2}$ & $\frac{q^2-1}{2}$ & $q(q+1)$ & $q(q-1)$ & $\frac{q(q+1)}{2}$ \\[1ex]
  \hline
  $\chi_0$  & 1 & 1 & 1 & 1 & 1 & 1\\[1ex]
  \hline
  $\chi_1$ & $q$ & 0 & 0  & 1 & -1 & 1\\[1ex]
  \hline
  $\chi_{\alpha}$ & $q+1$& 1 & 1 & $\alpha(x) + \alpha(x^{-1})$ & 0 & $2\alpha(\sqrt{-1})$ \\[1ex]
  \hline
$\pi(\eta)$ & $q-1$ & $-1$ & $-1$ & 0 & $-\eta(z) - \eta(z^{-1})$ & $0 $\\[1ex]
\hline
$\omega_e^{\pm}$ & $\frac{q+1}{2}$ & $\frac{1}{2}(1 \pm \sqrt{q})$ & $\frac{1}{2}(1 \mp \sqrt{q})$& $\alpha_0(x)$ & $0$& $\alpha_0(\sqrt{-1})$\\[1ex]
\hline
\end{tabular}
\end{center}

\section{Calculations}
This section is devoted to the proof of Theorem 6.1, the main step in our proof of this specific case of the Colmez conjecture.
\begin{thm}
Let $\Phi$ be a CM type of $E$ of signature $(q+1-\epsilon,\epsilon)$.  Then,
\begin{align*}
A_{\Phi}^0 &= \frac{1}{2}\tr_{E^c/k} -\frac{\epsilon(q+1-\epsilon)}{q(q+1)}(1-\rho)\tr_{E^c/k} + \frac{\epsilon(q+1-\epsilon)}{q(q+1)^2}(1-\rho)\chi_{\Ind_B^{\PSL_2(\bb{F}_q)}(\chi_0)} 
\end{align*}\end{thm}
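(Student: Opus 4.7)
The plan is to decompose $\Phi^c$ according to the $\rho$-structure imposed by the signature, reduce to a computation in $\bb{C}[H]$ with $H := \PSL_2(\bb{F}_q)$, and then use Frobenius reciprocity to collapse the class function projection onto at most two irreducible characters of $H$. Writing $S_a = \{i \in \{\infty\}\cup\bb{F}_q : \epsilon_i = a\}$ for $a \in \{0,1\}$ (so $|S_1| = \epsilon$), set $\Phi^c_a := \sum_{i \in S_a} g_i [B] \in \bb{C}[H]$; then $\Phi^c = \Phi^c_0 + \rho \Phi^c_1$, and because $\rho$ is central in $G := \Gal(E^c/\bb{Q}) = H \times \langle\rho\rangle$,
\[
\Phi^c \widetilde{\Phi^c} = \bigl(\Phi^c_0 \widetilde{\Phi^c_0} + \Phi^c_1 \widetilde{\Phi^c_1}\bigr) + \rho\bigl(\Phi^c_0 \widetilde{\Phi^c_1} + \Phi^c_1 \widetilde{\Phi^c_0}\bigr),
\]
with the two summands supported on $H$ and $\rho H$, respectively. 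The class function projection on $G$ preserves this splitting and on each piece reduces to the class function projection on $H$. Further, the identity $\Phi^c + \rho \Phi^c = [G]$ implies $A_\Phi(g) + A_\Phi(\rho g) = 1/2$, so the $\rho$-invariant part of $A_\Phi^0$ is the constant $\frac{1}{4}\chi_0$.

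The crucial observation is that for any irreducible character $\chi$ of $H$ outside $\{\chi_0,\chi_1\}$, the operator $\rho_\chi([B]) = |B|\cdot P_\chi^B$ vanishes: by Frobenius reciprocity, $\dim V_\chi^B = \langle\chi, \Ind_B^H(\chi_0)\rangle_H$, which is zero by Proposition 5.6. Hence $\rho_\chi(\Phi^c_a) = 0$ for every $a$, so $\chi(\Phi^c_a\widetilde{\Phi^c_b}) = 0$, and $A_\Phi^0$ involves at most the four characters $\chi_0, \chi_{k/\bb{Q}}, \chi_1\otimes\chi_0, \chi_1\otimes\chi_{k/\bb{Q}}$ of $G$ (and the sum-to-$1/2$ identity above further eliminates $\chi_1\otimes\chi_0$). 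For $\chi = \chi_0$, $\rho_{\chi_0}([B]) = |B|$ gives $\chi_0(\Phi^c_a\widetilde{\Phi^c_b}) = |S_a||S_b||B|^2$. For $\chi = \chi_1$, fix a unit vector $v \in V_{\chi_1}^B$; then $P_{\chi_1}^B$ is the rank-one orthogonal projection onto $\bb{C}v$, and setting $u_a := \sum_{i\in S_a} \rho_{\chi_1}(g_i)v$ one obtains $\chi_1(\Phi^c_a\widetilde{\Phi^c_b}) = |B|^2 \langle u_b,u_a\rangle$. The spherical function $\phi(g) := \langle v,\rho_{\chi_1}(g)v\rangle$ is $B$-bi-invariant and hence determined by its values on the two Bruhat double cosets $B$ and $BwB$; realizing $V_{\chi_1}$ inside $\bb{C}[H/B]$ yields $\phi|_B = 1$ and $\phi|_{BwB} = -1/q$. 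Nontriviality of $V_{\chi_1}$ gives $u_0 + u_1 = \sum_i \rho_{\chi_1}(g_i)v = 0$ (the sum is $H$-invariant in a nontrivial irreducible), and a direct count then yields $\|u_0\|^2 = |S_0| - |S_0|(|S_0|-1)/q = \epsilon(q+1-\epsilon)/q$.

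Assembling the four inner products, dividing by $|G| = q(q-1)(q+1)$, and rewriting the resulting combination of $\chi_0$, $\chi_{k/\bb{Q}}$, and $\chi_1\otimes\chi_{k/\bb{Q}}$ in terms of $\tr_{E^c/k}$, $(1-\rho)\tr_{E^c/k}$, and $(1-\rho)\chi_{\Ind_B^{\PSL_2(\bb{F}_q)}(\chi_0)}$ will produce the stated formula. The main obstacle is conceptual rather than technical: the Frobenius reciprocity collapse — that $[B]$ annihilates every irreducible of $H$ outside $\{\chi_0,\chi_1\}$ — is the single observation from which the striking independence of $A_\Phi^0$ from the specific CM type within a given signature follows. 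Once in place, the remaining work consists of the matrix coefficient computation $\phi(w) = -1/q$, the identity $u_0 + u_1 = 0$, and routine algebraic bookkeeping to match the prescribed decomposition on the right-hand side.
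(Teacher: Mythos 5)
Your proposal is correct, and it establishes Theorem 6.1 by a genuinely different route from the paper. The paper normalizes $\Phi$, expands $\Phi^c\widetilde{\Phi^c}$ into the terms $n_{-}(i)Bn_{-}(-j)$, and computes the class-function projection by brute force: Proposition 6.2 determines the exact conjugacy-class distribution of each set $n_{-}(i)Bn_{-}(-j)$ (with separate treatment of the $i=j$ terms via $B$ itself and a case split according to $q\equiv 1$ or $3\pmod 4$), and Lemmas 6.4 and 6.6 then invert the character table to convert sums of conjugacy classes into induced characters. You instead note, via Frobenius reciprocity and Proposition 5.6, that $[B]$ annihilates every irreducible of $\PSL_2(\bb{F}_q)$ outside $\{\chi_0,\chi_1\}$ (since $\Ind_B^{\PSL_2(\bb{F}_q)}(\chi_0)=\chi_0+\chi_1$), so $A_{\Phi}^0$ is supported on $\chi_0,\chi_1$ twisted by the two characters of $\Gal(k/\bb{Q})$, and the surviving coefficients follow from the Steinberg spherical function (value $1$ on $B$, $-1/q$ on $BwB$, which is forced by $\sum_g\phi(g)=0$) together with $u_0+u_1=0$. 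I checked that the numbers come out right: the coefficient of $\chi_1\otimes\chi_{k/\bb{Q}}$ is $\frac{4|B|^2}{|G|^2}\lVert u_0\rVert^2=\frac{\epsilon(q+1-\epsilon)}{q(q+1)^2}$ and the coefficient of $\chi_{k/\bb{Q}}$ is $\frac{|B|^2}{|G|^2}\bigl(|S_0|-|S_1|\bigr)^2=\frac14-\frac{\epsilon(q+1-\epsilon)}{(q+1)^2}$, in agreement with the stated formula after expanding $\frac12\tr_{E^c/k}$, $(1-\rho)\tr_{E^c/k}$, and $(1-\rho)\chi_{\Ind_B^{\PSL_2(\bb{F}_q)}(\chi_0)}$ in the character basis of $\Gal(E^c/\bb{Q})$. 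Your approach buys a uniform argument with no case analysis on $q\bmod 4$, no character-table inversion, and a transparent explanation of why $A_{\Phi}^0$ depends only on the signature; the paper's approach buys a completely elementary computation whose intermediate conjugacy-class data (Proposition 6.2) is explicit and potentially reusable. One bookkeeping caution for the final assembly: two factors of $1/[E^c:\bb{Q}]$ enter, one from the definition of $A_{\Phi}$ and one from the inner product extracting the coefficient of each irreducible character, so the single division by $|G|$ you describe undercounts; with that normalization corrected, your four inner products reproduce the theorem exactly.
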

\begin{proof}[Note] We saw there were many equivalence classes of CM types, even among a fixed signature.  However, $A_{\Phi}^0$ depends only on the signature of $\Phi$.  This observation, along with the result of Yang and Yin showing the Colmez conjecture is true if we average amongst CM types of a given signature \cite{YangYin}, will imply our final result.
\end{proof}
\begin{proof}
Let $\Phi$ be a CM type of $E$ of signature $(q+1-\epsilon,\epsilon)$. Without loss of generality (possibly by replacing $\Phi$ with $\rho\Phi$) we can assume $\Phi$ is of the following form:
\[
\Phi = w + \sum_{i \in \bb{F}_q} \rho^{\epsilon_{i}}n_{-}(i).
\] 
where $\epsilon_{i}$ is either 0 or 1, and $\epsilon = \sum \epsilon_{i}$.

A CM type of $E$ consists of $q+1$ embeddings of $E$ into $\bb{C}$.  Recall that an embedding $E \hookrightarrow \bb{C}$ is the same as a pair of embeddings $F \hookrightarrow \bb{C}$ and $k \hookrightarrow \bb{C}$.  So when we write the element $w$, we interpret that to be the embedding of $E$ into $\bb{C}$ corresponding to $w:F \hookrightarrow\bb{C}$ and the identity $k \hookrightarrow \bb{C}$. 

Next, we need to extend $\Phi$ to $\Phi^c$, the corresponding CM type on the Galois closure, $E^c$, of $E$.  Recall that $\Phi^c$ consists of all the embeddings $E^c \hookrightarrow \bb{C}$ which restrict to the embeddings in $\Phi$.  Since $E$ is the fixed field of $B$, we have that
\begin{align*}
\Phi^c &= wB + \sum_{i \in \bb{F}_q} \rho^{\epsilon_{i}}n_{-}(i)B \\
&= \tr_{E^c/k} + (\rho-1) \sum_{i \in \bb{F}_q} \epsilon_{i}n_{-}(i)B.
\end{align*}

Next, we find $\widetilde{\Phi^c}$ by inverting every element of $\Phi^c$,
\[
\widetilde{\Phi^c} = \tr_{E^c/k} + (\rho-1)\sum_{j \in \bb{F}_q}\epsilon_{j}Bn_{-}(-j).
\]
Since $A_{\Phi} = \frac{1}{[E^c:\bb{Q}]} \Phi^c \widetilde{\Phi^c}$, we have that 
\[
A_{\Phi} = \frac{1}{2}\tr_{E^c/k} + \frac{\epsilon}{q+1}(\rho-1)\tr_{E^c/k} + \frac{1}{q+1}(1-\rho)\sum_{i,j \in \bb{F}_q}\epsilon_{i}\epsilon_{j}n_{-}(i)Bn_{-}(-j).
\]

Finally we need to calculate $A_{\Phi}^0$, where we project $A_{\Phi}$ onto the space of class functions.  The two terms in $A_{\Phi}$ involving $\tr_{E^c/k}$ are already conjugacy invariant and remain unchanged when we pass to $A_{\Phi}^0$ and therefore
\[
A_{\Phi}^0 = \frac{1}{2}\tr_{E^c/k} + \frac{\epsilon}{q+1}(\rho-1)\tr_{E^c/k} + \star,
\]
where in the above equation $\star$ is given by
\[
\star = \frac{1}{[E^c:\bb{Q}]} \sum_{\sigma \in \Gal(E^c/\bb{Q})} \sigma \Bigg( \frac{1}{q+1}(1-\rho)\sum_{i,j \in \bb{F}_q} \epsilon_i\epsilon_j n_{-}(i)Bn_{-}(-j)\Bigg)\sigma^{-1}.
\]
Since $\Gal(E^c/\bb{Q}) \cong \Gal(E^c/k) \times \langle \rho \rangle \cong \PSL_2(\bb{F}_q) \times \bb{Z}/2\bb{Z}$, $\star$ is equal to the following:
\[
\star = \frac{1-\rho}{\# \PSL_2(\bb{F}_q) \cdot (q+1)} \sum_{\sigma \in \PSL_2(\bb{F}_q)} \sigma \Bigg( \sum_{i,j \in \bb{F}_q} \epsilon_i \epsilon_jn_{-}(i)Bn_{-}(-j) \Bigg)\sigma^{-1}.
\]

The proof of the theorem amounts to computing $\star$.  As we sum over conjugates of elements in $\PSL_2(\bb{F}_q)$, we replace each element with its conjugacy class.  That is to say, for $g \in \PSL_2(\bb{F}_q)$, let $C(g)$ denote the conjugacy class of $g$.  Then,
\[
\frac{1}{\#\PSL_2(\bb{F}_q)}\sum_{\sigma \in \PSL_2(\bb{F}_q)} \sigma g \sigma^{-1} = \frac{1}{\#C(g)}C(g).
\]
Therefore, to compute $\star$, we will need to determine the conjugacy class of each element appearing in $\star$.  On the one hand, there are $\epsilon$ many terms with $i = j$.  Since $n_{-}(i)Bn_{-}(-i)$ is conjugate to $B$, when we sum over the conjugates of $n_{-}(i)Bn_{-}(-i)$, it is the same as summing over all of the conjugates of $B$.

For the following parts of this section, we will write out $B$ more explicitly.  Fix $A \subset \bb{F}_q^{\ast}$ a set of representatives for $\bb{F}_q^{\ast}/\{\pm\}$.  That is, $A$ consists of $\frac{q-1}{2}$ elements of $\bb{F}_q^{\ast}$ such that $A/\{\pm\} = \bb{F}_q^{\ast}/\{\pm\}$.  If $q \equiv 3 \pmod{4}$, we take $A$ to consist of all of the squares in $\bb{F}_q^{\ast}$.  In either case, assume $1 \in A$ (and thus $-1 \not\in A$).  The choice of $A$ will not affect the end result, it only serves to make the exposition cleaner.  Then for example,
\[
B = \sum_{(a,b) \in A \times \bb{F}_q}\begin{bmatrix} a & b \\0 & a^{-1} \end{bmatrix}.
\]

There are $\epsilon(\epsilon-1)$ many terms of $\star$ with $i \neq j$ and Proposition 6.2 concerns the conjugacy classes of elements in $n_{-}(i)Bn_{-}(-j)$ for $i \neq j$.

\begin{prop}
Fix $i,j \in \bb{F}_q$ with $i \neq j$.  Then, $n_{-}(i)Bn_{-}(-j)$ consists of $\frac{q-1}{2}$ elements in $\tr( 0)$, $q-1$ elements in $\tr(\alpha)$ for every $\alpha \in \bb{F}_q^{\ast}/\{\pm\}$ with $\alpha \neq \pm 2$, $\frac{q-1}{2}$ elements in $\tr^{\square}(2)$, and $\frac{q-1}{2}$ elements in $\tr^{\not\square}(2)$.

In particular, the conjugacy classes of elements in $n_{-}(i)Bn_{-}(-j)$ are independent of $i$ and $j$. 
\end{prop}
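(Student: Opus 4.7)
The plan is to compute the map $(a,b) \mapsto n_{-}(i)\begin{bmatrix} a & b \\ 0 & a^{-1}\end{bmatrix}n_{-}(-j)$ explicitly and read off traces, then use the classification in Section 5 to read off conjugacy classes. A direct calculation gives the product matrix
\[
\begin{bmatrix} a-jb & b \\ ia-jib-ja^{-1} & ib+a^{-1}\end{bmatrix},
\]
whose trace equals $a+a^{-1}+(i-j)b$. Setting $c := i-j \neq 0$, for each fixed $a \in A$ the linear map $b \mapsto a+a^{-1}+cb$ is a bijection $\mathbb{F}_q \to \mathbb{F}_q$. Hence every trace value $t \in \mathbb{F}_q$ is realized by exactly $|A| = \tfrac{q-1}{2}$ matrices of $n_{-}(i)Bn_{-}(-j)$, giving the correct total cardinality $\tfrac{q(q-1)}{2}$.

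For traces $t \not\in \{\pm 2\}$, Proposition 5.1 says any two matrices in $\SL_2(\mathbb{F}_q)$ with the same trace are conjugate, and in $\PSL_2(\mathbb{F}_q)$ we further identify $\tr = t$ with $\tr = -t$. Thus for each $\alpha \in \mathbb{F}_q^\ast/\{\pm\}$ with $\alpha \neq \pm 2$ we get $2 \cdot \tfrac{q-1}{2} = q-1$ matrices in $\tr(\alpha)$ (the classes $\pm t$ fuse), and for $\alpha = 0$ (where $t = -t$) we get $\tfrac{q-1}{2}$ matrices in $\tr(0)$. This accounts for all classes other than $\tr^{\square}(2)$ and $\tr^{\not\square}(2)$.

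The main obstacle is the trace $\pm 2$ case, where the conjugacy class is not determined by the trace alone and requires Propositions 5.2 and 5.3. Solving the trace equation $a+a^{-1}+cb = \pm 2$ yields, for $t=2$: either $a=1,b=0$ (giving $\begin{bmatrix} 1 & 0\\ c & 1\end{bmatrix}$, conjugate via Prop.\ 5.2(a) to $\begin{bmatrix} 1 & -c\\ 0 & 1\end{bmatrix}$) or $a\neq 1$ and $b = -(a-1)^2/(ac)$ (conjugate to $\begin{bmatrix} 1 & b\\ 0 & 1\end{bmatrix}$); for $t=-2$: always $b = -(a+1)^2/(ac) \neq 0$ since $-1 \notin A$, conjugate to $\begin{bmatrix}-1 & b \\ 0 & -1\end{bmatrix}$ and representing the $\PSL_2$-class of $\begin{bmatrix} 1 & -b \\ 0 & 1\end{bmatrix}$. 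Applying Proposition 5.3, membership in $\tr^\square(2)$ is then governed by the square class of $-c$ (case $a=1$, $t=2$), $-ac$ (case $a\neq 1$, $t=2$), and $ac$ (case $t=-2$).

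The remaining step is to verify that these conditions collectively yield exactly $\tfrac{q-1}{2}$ matrices in $\tr^\square(2)$ regardless of $c$ and of $q \bmod 4$. This splits into subcases by whether $c$ is a square and by the parity of $q \bmod 4$ (which controls both whether $-1$ is a square and the structure of $A$: if $q \equiv 3 \pmod 4$ then $A$ consists entirely of squares, while if $q \equiv 1 \pmod 4$ then $A$ has equal numbers of squares and non-squares). I expect this uniform-count verification to be the only nontrivial part: one tabulates in each case how many $a \in A$ satisfy the square condition for $t=2$ and for $t=-2$, and checks the sum is always $\tfrac{q-1}{2}$. The final count $\tfrac{q-1}{2}$ in each of $\tr^\square(2)$ and $\tr^{\not\square}(2)$ follows, completing the classification and manifestly showing independence from $i,j$.
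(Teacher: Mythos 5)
Your proposal is correct and follows essentially the same route as the paper: the same explicit matrix and affine-in-$b$ trace computation, the same counts for $\tr(0)$ and $\tr(\alpha)$ with $\alpha \neq \pm 2$, and the same use of Propositions 5.2 and 5.3 to sort the trace-$\pm 2$ matrices by square classes. The tabulation you defer does close immediately: your $t=2$ condition is uniformly ``$-ac$ is a square'' (the $a=1$ case gives $-c=-ac$), so when $q \equiv 3 \pmod 4$ it is complementary to the $t=-2$ condition ``$ac$ is a square'' and each $a \in A$ contributes exactly one element to $\tr^{\square}(2)$, while when $q \equiv 1 \pmod 4$ the two conditions coincide and exactly $\frac{q-1}{4}$ of the $a \in A$ satisfy them, giving $\frac{q-1}{2}$ in either case.
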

\begin{proof} The elements of $n_{-}(i)Bn_{-}(-j)$ are of the form
\[
n_{-}(i)Bn_{-}(-j) = \sum_{a \in A}\sum_{b \in \bb{F}_q} \begin{bmatrix} a-bj & b \\ ai-bij-a^{-1}j & bi+a^{-1}\end{bmatrix}.
\]

In doing this calculation, we will think of these matrices as elements of $\SL_2(\bb{F}_q)$ and then project down to $\PSL_2(\bb{F}_q)$.  For a fixed $a \in A$, the trace of this matrix is $b(i-j) + a + a^{-1}$, which is affine linear in $b$.  As $b$ ranges through $\bb{F}_q$, the trace takes on every value in $\bb{F}_q$.  When we project down to $\PSL_2(\bb{F}_q)$, this gives us $1$ matrix in $\tr(0)$, 2 matrices of $\tr(\alpha)$ for each $\alpha \in \bb{F}_q^{\ast}/\{\pm\}$ with $\alpha \neq \pm 2$, and two matrices of trace $\pm 2$.  

First, suppose that $q \equiv 3 \pmod{4}$.  We claim that the two matrices of trace $\pm 2$ always consists of 1 element in $\tr^{\square}(2)$ and 1 element in $\tr^{\not\square}(2)$ (Note that since $i \neq j$, the identity matrix never appears).

For a fixed $a \in A$, the two $b$'s that give us trace $\pm $2 matrices are $b_p$ and $b_m$, where
\[
b_p = \frac{2-a-a^{-1}}{i-j}, \quad \quad b_m = \frac{-2-a-a^{-1}}{i-j}.
\]

Denote the corresponding matrices by $M_p$, $M_m$.  I.e.,
\[
M_p = \begin{bmatrix} a-b_pj & b_p \\ ai-b_pij-a^{-1}j & b_pi+a^{-1} \end{bmatrix}, \quad \quad M_m = \begin{bmatrix} a-b_mj & b_m \\ ai-b_mij-a^{-1}j & b_mi+a^{-1} \end{bmatrix}.
\]
If $a = 1$, our claim is clear, so suppose $a \neq 1$.  In this case, neither $b_p$ nor $b_m$ are $0$.  So by Proposition 4.2, we have the following, where $\sim$ denotes conjugacy in $\PSL_2(\bb{F}_q)$.
\[
M_p \sim \begin{bmatrix} 1 & b_p \\ 0 & 1 \end{bmatrix}, \quad \quad M_m \sim\begin{bmatrix} 1 & -b_m \\ 0 & 1 \end{bmatrix}.
\]

Recall that the conjugacy classes of elements of $\PSL_2(\bb{F}_q)$ with $1$'s on the diagonal and a $0$ in the lower left corner are determined based on whether the element in the upper right corner is a zero, a square, or a non-square.  Since $\frac{b_p}{-b_m} =-\frac{(a-1)^2}{(a+1)^2}$, and $-1$ is not a square in $\bb{F}_q$, that means that $b_p$ and $-b_m$ are distinct in $\bb{F}_q^{\ast}/(\bb{F}_q^{\ast})^2$, and so $M_p$ and $M_m$ are not conjugate in $\PSL_2(\bb{F}_q)$.  

This is true for every value of $a \in A$, so adding up all of these terms for each value of $a$ gives the desired result when $q \equiv 3 \pmod{4}$.

Next, suppose $q \equiv 1 \pmod{4}$. One thing to note is that since $-1$ is a square in $\bb{F}_q$, $a$ is a square if and only if $-a$ is a square, and so we can talk about whether or not elements of $A$ are squares and this notion is independent of our choice of $A$. Again, for a fixed $a \in A$, the following sum contains an element of every trace:
\[
\sum_{b \in \bb{F}_q} \begin{bmatrix} a-bj & b \\ ai-bij-a^{-1}j & bi+a^{-1} \end{bmatrix}.
\]
When we project down to $\PSL_2(\bb{F}_q)$, we get 1 element in $\tr(0)$, 2 elements in $\tr(\alpha)$ for every $\alpha \in \bb{F}_q^{\ast}/\{\pm\}$ with $\alpha \neq \pm 2$, and 2 elements of trace $\pm 2$ (and again, since $i \neq j$, neither of these can be the identity).

Just as before, for a fixed $a \in A$, there are two $b$'s giving us matrices of trace $\pm 2$, namely
\[
b_p = \frac{2-a-a^{-1}}{i-j}, \quad \quad b_m = \frac{-2-a-a^{-1}}{i-j}.
\]
with $M_p$ and $M_m$ the corresponding matrices.

First, we will show $M_p$ is conjugate to $M_m$.  If $a = 1$, then using Proposition 4.2, we have the following:
\[
M_p \sim\begin{bmatrix}1 & -(i-j) \\ 0 & 1 \end{bmatrix}, \quad \quad M_m \sim \begin{bmatrix} 1 & \frac{4}{i-j} \\ 0 & 1 \end{bmatrix}.
\]
Since $q \equiv 1 \pmod{4}$, we have that $-(i-j)$ is a square if and only if $\frac{4}{i-j}$ is a square if and only if $(i-j)$ is a square. 

For $a \neq 1$, we again use proposition 4.2 to see
\[
M_p \sim \begin{bmatrix} 1 & b_p \\ 0 & 1 \end{bmatrix},  \quad \quad M_m \sim \begin{bmatrix} 1 & -b_m \\ 0 & 1 \end{bmatrix} .
\] 
Since $\frac{b_p}{-b_m} = -\frac{(a+1)^2}{(a-1)^2}$, we have that $b_p$ is a square if and only if $-b_m$ is a square, and so $M_p$ is conjugate to $M_m$.

Next, we will see when $M_p,M_m \in \tr{}^{\square}(2)$ and when $M_p,M_m \in \tr{}^{\not\square}(2)$. This amounts to determining when $-b_m = \frac{2+a+a^{-1}}{i-j}$ is a square.  If $a+a^{-1}+2 = \ell^2$ for some $\ell$, then $a = \Big(\frac{a+1}{\ell}\Big)^2$, and conversely if $a = k^2$, then $a+a^{-1}+2 = (k+k^{-1})^2$.  Therefore, $a+a^{-1}+2$ is a square if and only if $a$ is a square.  This gives us the following, which also holds when $a = 1$,
\begin{displaymath}
   M_p \sim M_m \in \left\{
     \begin{array}{lr}
       \tr^{\square}(2) & \text{ if } a \equiv i-j \text{ in } \bb{F}_q^{\ast}/(\bb{F}_q^{\ast})^2\\
       \tr^{\not\square}(2) & \text{ if }a \not\equiv i-j \text{ in } \bb{F}_q^{\ast}/(\bb{F}_q^{\ast})^2.
     \end{array}
   \right.
\end{displaymath} 

In either case, whether or not $i-j$ is a square, $\frac{q-1}{4}$ of the values of $a$ will give us 2 elements of $\tr^{\square}(2)$ and the remaining $\frac{q-1}{4}$ values of $a$ will give us 2 elements of $\tr^{\not\square}(2)$.

\end{proof}

Let us return to calculating $\star$.  From here on out, we will split it into two cases, based on $q \pmod{4}$.  The main difference (aside from the character tables) is that if $q \equiv 1 \pmod{4}$, then $B$ contains elements of trace 0, which doesn't happen if $q \equiv 3 \pmod{4}$. It is interesting to note that the final formula for $A_{\Phi}^0$ does not depend on whether $q \equiv 1 \text{ or } 3 \pmod{4}.$

First, suppose $q \equiv 3 \pmod{4}$.  Proposition 6.2 states the conjugacy classes appearing in $n_{-}(i)Bn_{-}(-j)$, but we still have the terms in $\star$ of the form $n_{-}(i)Bn_{-}(-i)$, which are conjugate to $B$.  
\begin{prop}
Suppose $q \equiv 3 \pmod{4}$.  Then, $B$ consists of the identity matrix, $\frac{q-1}{2}$ matrices in $\tr^{\square}(2)$, $\frac{q-1}{2}$ matrices in $\tr^{\not\square}(2)$, and $q$ matrices in $\tr(a+a^{-1})$ for every $a$ in $\bb{F}_q^{\ast}/\{\pm\}$.
\end{prop}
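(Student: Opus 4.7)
The plan is to partition the group-ring representation
\[
B = \sum_{a \in A,\, b \in \bb{F}_q} \begin{bmatrix} a & b \\ 0 & a^{-1} \end{bmatrix}
\]
according to the value of $a$, and in each case read off the $\PSL_2(\bb{F}_q)$-conjugacy class of the resulting matrix using the classifications already established in Propositions 4.3--4.4 and 5.1.

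The identity element of $\PSL_2(\bb{F}_q)$ arises exactly once in the sum: from the summand $(a,b) = (1,0)$. It cannot occur for any other $a$ because we chose $A$ so that $1 \in A$ but $-1 \notin A$. The remaining $q-1$ summands with $a = 1$ are of the shape $\begin{bmatrix} 1 & b \\ 0 & 1 \end{bmatrix}$ with $b \in \bb{F}_q^{\ast}$, all of trace $2$. By Proposition 4.4 their $\PSL_2(\bb{F}_q)$-conjugacy class is determined by whether $b$ lies in $(\bb{F}_q^{\ast})^2$. Since $\bb{F}_q^{\ast}$ contains exactly $\frac{q-1}{2}$ squares and $\frac{q-1}{2}$ non-squares, this yields $\frac{q-1}{2}$ matrices in each of $\tr^{\square}(2)$ and $\tr^{\not\square}(2)$.

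For $a \in A \setminus \{1\}$ and arbitrary $b \in \bb{F}_q$, the matrix $\begin{bmatrix} a & b \\ 0 & a^{-1} \end{bmatrix}$ has trace $a + a^{-1}$, independent of $b$. To apply Proposition 5.1 I must check that this trace is neither $\pm 2$ nor $0$: the equation $a + a^{-1} = 2$ forces $a = 1$ and $a + a^{-1} = -2$ forces $a = -1$, both of which are excluded from $A \setminus \{1\}$; and $a + a^{-1} = 0$ forces $a^2 = -1$, which has no solution in $\bb{F}_q$ precisely because the hypothesis $q \equiv 3 \pmod{4}$ makes $-1$ a non-square. This is the one and only place the congruence enters, and it is the only real content of the proof. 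With the trace thus safely in the generic regime, Proposition 5.1 places all $q$ summands of a given $a$ in the single conjugacy class $\tr(a + a^{-1})$.

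As a sanity check, the total count $1 + (q-1) + \frac{q-3}{2}\cdot q$ simplifies to $\frac{q(q-1)}{2} = |B|$, confirming the enumeration is exhaustive. The main obstacle is essentially bookkeeping; the nontrivial observation is the absence of trace-$0$ elements in $B$, which will be the key distinction between this case and the companion $q \equiv 1 \pmod{4}$ analysis (where the diagonal entries $a = \pm\sqrt{-1}$ do yield trace-$0$ contributions).
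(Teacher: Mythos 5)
Your proof is correct. The paper states this proposition without proof, and your argument --- partitioning $B$ by the diagonal entry $a$, observing the identity arises only from $(a,b)=(1,0)$, splitting the $a=1$, $b\neq 0$ unipotents evenly between $\tr^{\square}(2)$ and $\tr^{\not\square}(2)$ according to the square class of $b$, and using that $a+a^{-1}\neq 0,\pm 2$ for $a\neq \pm 1$ precisely because $q\equiv 3\pmod 4$, so that each such $a$ contributes $q$ matrices to $\tr(a+a^{-1})$ --- is exactly the routine verification the paper leaves implicit (your internal proposition numbers differ from the paper's, where the facts you invoke are Propositions 5.1 and 5.3, but the content is the same).
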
  
Putting this all together gives the following formula. Recall that $A \subseteq \bb{F}_q^{\ast}$ is a set of representatives for $\bb{F}_q^{\ast}/\{\pm\}$ such that $1 \in A$. 
\begin{align*}
\star &=\frac{1-\rho}{q+1} \frac{1}{\#\PSL_2(\bb{F}_q)} \sum_{\sigma \in \PSL_2(\bb{F}_q)} \sigma \Bigg( \sum_{i,j \in \bb{F}_q} \epsilon_i\epsilon_j n_{-}(i)Bn_{-}(-j) \Bigg) \sigma^{-1} \\
&= \frac{1-\rho}{q+1} \Bigg[ \epsilon \Bigg( \operatorname{Id} + \frac{1}{q+1}(\tr^{\square}(2) + \tr^{\not\square}(2)) + \frac{1}{q+1}\sum_{x \in A\setminus\{1\}}\tr(x+x^{-1})  \Bigg) \\
&+ \epsilon(\epsilon-1) \Bigg(\frac{1}{q}\tr_{E^c/k} -\frac{1}{q}\operatorname{Id}-\frac{1}{q(q+1)}(\tr^{\square}(2) + \tr^{\not\square}(2)) - \frac{1}{q(q+1)}\sum_{x \in A\setminus\{1\}}\tr(x+x^{-1}) \Bigg)\Bigg]\\
&= \frac{\epsilon}{q+1}(1-\rho)\Bigg( \frac{q+1-\epsilon}{q(q+1)}\sum_{x \in A\setminus\{1\}}\tr(x+x^{-1}) + \frac{q+1-\epsilon}{q}\operatorname{Id} \\
&+ \frac{q+1-\epsilon}{q(q+1)}(\tr^{\square}(2) + \tr^{\not\square}(2)) + \frac{\epsilon-1}{q}\tr_{E^c/k} \Bigg)
\end{align*}

\begin{lem} Let $q \equiv 3 \pmod{4}$ and let $\chi_0$ denote the trivial character.  Then we have the following relations. Recall that $A \subseteq\bb{F}_q^{\ast}$ is a set of representatives for $\bb{F}_q^{\ast}/\{\pm\}$ such that $1 \in A$.
\begin{enumerate}[(a)]
\item $\displaystyle
\sum_{x \in A\setminus\{1\}} \tr(x+x^{-1}) =\chi_{\Ind_B^{\PSL_2(\bb{F}_q)}(\chi_0)} - \frac{2}{q-1}\chi_{\Ind_U^{\PSL_2(\bb{F}_q)}(\chi_0)}
$
\item $\displaystyle
\tr^{\square}(2) + \tr^{\not\square}(2) + (q+1)\operatorname{Id} = \frac{2}{q-1}\chi_{\Ind_U^{\PSL_2(\bb{F}_q)}(\chi_0)}
$
\end{enumerate}
\end{lem}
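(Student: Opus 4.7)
My plan is to verify both (a) and (b) as equalities of class functions on $G := \PSL_2(\bb{F}_q)$ by evaluating each side on every conjugacy class from Proposition 5.5. The key observation is that the induced characters on the right-hand sides can be computed directly as permutation characters: $\chi_{\Ind_B^G(\chi_0)}$ counts fixed points of the $G$-action on $G/B \cong \mathbb{P}^1(\bb{F}_q)$, while $\chi_{\Ind_U^G(\chi_0)}$ counts fixed points on $G/U$. Under the identification $\bb{C}[G] \cong \operatorname{Maps}(G, \bb{C})$ used throughout the paper, the LHS of each identity is already a class function whose values can be matched directly with those of the RHS.

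For (b), the LHS is the class function taking value $q+1$ at $\operatorname{Id}$, value $1$ on each of $\tr^{\square}(2)$ and $\tr^{\not\square}(2)$, and $0$ elsewhere. To evaluate the RHS, I would use that every non-identity unipotent element lies in a unique Borel, hence in a unique conjugate of $U$. Combined with the fibration $G/U \to G/B$ whose fibers have size $|B/U| = (q-1)/2$, this yields $\chi_{\Ind_U^G(\chi_0)}(g) = (q-1)/2$ for unipotent $g \ne \operatorname{Id}$, the value $(q^2-1)/2$ at $\operatorname{Id}$, and $0$ on regular semisimple classes (since these contain no unipotent elements). Scaling by $2/(q-1)$ then reproduces the LHS on every class.

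For (a), I would first analyze the LHS. Since $q \equiv 3 \pmod 4$, $-1$ is a non-square in $\bb{F}_q$, so for each $x \in A \setminus \{1\}$ the set $\{x, x^{-1}, -x, -x^{-1}\}$ meets $A$ in exactly the two distinct elements $\{x, x^{-1}\}$. Because the split regular semisimple conjugacy classes of $\PSL_2(\bb{F}_q)$ are parameterized by the $\{x, x^{-1}, -x, -x^{-1}\}$-orbits on $\bb{F}_q^* \setminus \{\pm 1\}$, the map $x \mapsto \tr(x + x^{-1})$ is $2$-to-$1$ onto these classes, and hence the LHS equals $2$ on every split regular semisimple class and $0$ elsewhere; in particular $\tr(0)$ is not in the image because $x + x^{-1} = 0$ would force $-1 \in (\bb{F}_q^*)^2$. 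For the RHS I would use $\chi_{\Ind_B^G(\chi_0)} = \chi_0 + \chi_1$ and count fixed points on $\mathbb{P}^1(\bb{F}_q)$: $q+1$ at $\operatorname{Id}$, $1$ on each unipotent class (a single repeated eigenvector), $2$ on split regular semisimple classes, and $0$ on non-split classes and on $\tr(0)$ (whose eigenvalues $\pm\sqrt{-1}$ lie outside $\bb{F}_q$ when $q \equiv 3 \pmod 4$). Subtracting $\frac{2}{q-1}\chi_{\Ind_U^G(\chi_0)}$, whose values were computed in the proof of (b), cancels the contributions at $\operatorname{Id}$ and the two unipotent classes, leaving exactly $2$ on split classes and $0$ elsewhere.

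The main point that requires care is the trace-zero conjugacy class: one must verify both that no $x \in \bb{F}_q^*$ satisfies $x + x^{-1} = 0$ (so that $\tr(0)$ is absent from the LHS of (a)) and that the RHS evaluates to $0$ there as well, via the non-existence of $\bb{F}_q$-rational eigenvectors for matrices of trace $0$ when $q \equiv 3 \pmod 4$. Beyond this, the argument reduces to straightforward class-by-class bookkeeping.
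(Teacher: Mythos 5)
Your proposal is correct: every value you assert checks out, and the two identities do hold class by class (both sides of (b) are $q+1$ at $\operatorname{Id}$, $1$ on the two nontrivial unipotent classes, $0$ on all semisimple classes; both sides of (a) are $2$ on the split regular semisimple classes and $0$ elsewhere, with the trace-zero class correctly excluded on both sides since $x+x^{-1}=0$ has no solution and trace-zero elements have no rational eigenline when $q\equiv 3\pmod 4$). Your route is genuinely different from the paper's: the paper inverts the character table via the orthogonality relations, expands each class-sum indicator in the basis of irreducible characters, and compares with the decompositions of $\Ind_B^{\PSL_2(\bb{F}_q)}(\chi_0)$ and $\Ind_U^{\PSL_2(\bb{F}_q)}(\chi_0)$, whereas you work in the dual basis, evaluating both sides on each conjugacy class and computing the induced characters as permutation characters by counting fixed points on $\PSL_2(\bb{F}_q)/B\cong\bb{P}^1(\bb{F}_q)$ and $\PSL_2(\bb{F}_q)/U$. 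What your method buys is self-containedness and a built-in consistency check: your fixed-point count at the trace-zero class (e.g.\ $w$ acts on $\bb{P}^1(\bb{F}_q)$ by $t\mapsto -1/t$, with no fixed points for $q\equiv 3\pmod 4$) gives $\chi_{\Ind_B^{\PSL_2(\bb{F}_q)}(\chi_0)}=0$ there, i.e.\ $\chi_1=-1$ on that class; the printed table in Section 5 shows $+1$ in that entry, which is a misprint (column orthogonality forces $-1$), and a verification of (a) read naively off the printed table would stumble at exactly that class, while the lemma itself is correct. What the paper's method buys is the explicit expansion of each class indicator in irreducible characters, a computation reused for the companion Lemma 6.6 when $q\equiv 1\pmod 4$. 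One small step worth making explicit in your (b): the value $\tfrac{q-1}{2}$ at a nontrivial unipotent $g$ needs not only that $g$ fixes a unique point of $\PSL_2(\bb{F}_q)/B$ but also that it fixes all $\tfrac{q-1}{2}$ cosets of $U$ above that point, which holds because $U$ is normal in $B$ and every unipotent element of $B$ lies in $U$; alternatively, the Frobenius formula gives it at once, since $\chi_{\Ind_U^{\PSL_2(\bb{F}_q)}(\chi_0)}(g)=\#\bigl(C(g)\cap U\bigr)\cdot\lvert C_{\PSL_2(\bb{F}_q)}(g)\rvert/\lvert U\rvert=\tfrac{q-1}{2}\cdot q/q$.
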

\begin{proof}
Using the orthogonality relations, we can find the inverse of the character table (thinking of the character table as a matrix).  This allows us to rewrite the indicator function of each conjugacy class in terms of the irreducible representations of $\PSL_2(\bb{F}_q)$.  From there, a straightforward calculation gives the result.
\end{proof}
Using lemma 6.4 to rewrite $\star$ gives 
\[
\star = \frac{\epsilon}{q+1}(1-\rho)\Bigg( \frac{q+1-\epsilon}{q(q+1)} \chi_{\Ind_B^{\PSL_2(\bb{F}_q)}(\chi_0)} + \frac{\epsilon-1}{q}\tr_{E^c/k} \Bigg)
\]

Combining the previous lemma and our work to calculate $\star$ gives the following formula for $A_{\Phi}^0$.
\begin{align*}
A_{\Phi}^0 &= \frac{1}{2}\tr_{E^c/k} -\frac{\epsilon(q+1-\epsilon)}{q(q+1)}(1-\rho)\tr_{E^c/k} + \frac{\epsilon(q+1-\epsilon)}{q(q+1)^2}(1-\rho)\chi_{\Ind_B^{\PSL_2(\bb{F}_q)}(\chi_0)}
\end{align*}

Now, suppose $q \equiv 1 \pmod{4}$.  Proposition 6.2 states the conjugacy classes appearing in $n_{-}(i)Bn_{-}(-j)$.  The remaining terms are of the form $n_{-}(i)Bn_{-}(-i)$, which are all conjugate to $B$.

\begin{prop}
Suppose $q \equiv 1 \pmod{4}$.  Then, $B$ consists of the identity matrix, $\frac{q-1}{2}$ matrices in $\tr^{\square}(2)$, $\frac{q-1}{2}$ matrices in $\tr^{\not\square}(2)$, $q$ matrices in $\tr(0)$, and $q$ matrices in $\tr(a+a^{-1})$ for every $a \in \bb{F}_q^{\ast}/\{\pm\}$.  
\end{prop}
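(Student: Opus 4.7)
The plan is to enumerate the elements of $B \subset \PSL_2(\bb{F}_q)$ directly and sort them into conjugacy classes using Propositions 4.1, 4.2, and 4.3, exactly mirroring the strategy behind Proposition 6.3. Represent each element of $B$ as $\begin{bmatrix} a & b \\ 0 & a^{-1} \end{bmatrix}$ with $a \in \bb{F}_q^{\ast}$ and $b \in \bb{F}_q$, keeping in mind the identification $(a,b) \sim (-a,-b)$ coming from the quotient by $\pm I$. This gives $|B| = q(q-1)/2$ and we will parameterize by $\bar a \in \bb{F}_q^{\ast}/\{\pm\}$.

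I would then split into cases according to $\bar a$. First, when $\bar a = \bar 1$ and $b = 0$ we obtain the identity, contributing $1$ element. When $\bar a = \bar 1$ and $b \neq 0$, the matrix has trace $\pm 2$ and is already of the form appearing in Proposition 4.3(a); by Proposition 4.4 its class in $\PSL_2(\bb{F}_q)$ is determined by whether $b$ is a square. Here is the one place where $q \equiv 1 \pmod{4}$ plays a role: since $-1$ is a square, passing from $(1,b)$ to the identified representative $(-1,-b)$ preserves square-ness of the upper-right entry, so the $q-1$ resulting elements of $\PSL_2(\bb{F}_q)$ split evenly into $(q-1)/2$ in $\tr^{\square}(2)$ and $(q-1)/2$ in $\tr^{\not\square}(2)$.

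Next, the new feature compared to the $q \equiv 3 \pmod 4$ case: when $\bar a = \overline{\sqrt{-1}}$ (which exists precisely because $q \equiv 1 \pmod 4$), the matrix has trace $0$ for any $b$, and varying $b$ yields exactly $q$ distinct elements of $\PSL_2(\bb{F}_q)$, all in $\tr(0)$. Finally, for every remaining $\bar a \in \bb{F}_q^{\ast}/\{\pm\}$ (so $\bar a \neq \bar 1, \overline{\sqrt{-1}}$), the matrix has eigenvalues $a$, $a^{-1}$ which are distinct and lie in $\bb{F}_q^{\ast}$; Proposition 4.1 then places it in $\tr(a+a^{-1})$. Varying $b$ over $\bb{F}_q$ gives $q$ elements of $\PSL_2(\bb{F}_q)$ lying in this class.

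Summing the contributions, the total is $1 + (q-1) + q + \tfrac{q-5}{2}\cdot q = \tfrac{q(q-1)}{2}$, confirming the enumeration is exhaustive and no class has been missed. There is no real obstacle to this proof; the only subtle point is tracking the $\PSL_2$ versus $\SL_2$ identification when handling the trace $\pm 2$ rows, and verifying that the even split between $\tr^{\square}(2)$ and $\tr^{\not\square}(2)$ indeed occurs when $-1$ is a square. The rest is bookkeeping.
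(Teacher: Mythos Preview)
Your proposal is correct; the paper states this proposition without proof, treating it as a routine enumeration, and your direct count by the diagonal parameter $\bar a \in \bb{F}_q^{\ast}/\{\pm\}$ is exactly the natural argument the paper is implicitly relying on. The one clarification worth making explicit is that the clause ``$q$ matrices in $\tr(a+a^{-1})$ for every $a \in \bb{F}_q^{\ast}/\{\pm\}$'' in the proposition should be read with $a \neq \pm 1, \pm\sqrt{-1}$ excluded (as you do), consistent with how the paper uses the result in the subsequent computation of $\star$, where the sum runs over $x \in A \setminus \{1,\sqrt{-1}\}$.
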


Putting this all together gives the following formula for $\star$, 
\begin{align*}
\star &=\frac{1-\rho}{q+1} \frac{1}{\#\PSL_2(\bb{F}_q)} \sum_{\sigma \in \PSL_2(\bb{F}_q)} \sigma \Bigg( \sum_{i,j \in \bb{F}_q} \epsilon_i\epsilon_j n_{-}(i)Bn_{-}(-j) \Bigg) \sigma^{-1} \\
&= \frac{\epsilon}{q+1}(1-\rho)\Bigg[\operatorname{Id} + \frac{1}{q+1}(\tr^{\square}(2) + \tr^{\not\square}(2)) + \frac{2}{q+1}\tr(0)\\
 &+\frac{1}{q+1}\sum_{x \in A\setminus\{1, \sqrt{-1}\}}\tr(x+x^{-1}) + (\epsilon-1)\Bigg(\frac{1}{q}\tr_{E^c/k} -\frac{1}{q}\operatorname{Id}-\frac{2}{q(q+1)}\tr(0) \\
 &-\frac{1}{q(q+1)}(\tr^{\square}(2) + \tr^{\not\square}(2)) -\frac{1}{q(q+1)}\sum_{x \in A\setminus\{1,\sqrt{-1}\}} \tr(x+x^{-1}) \Bigg)\Bigg]
\end{align*}

We will use the following lemma to simplify the above sum, whose proof is a standard calculation using the inverted character table similar to Lemma 6.4.
\begin{lem}
 Let $q \equiv 1 \pmod{4}$ and let $\chi_0$ denote the trivial character.  Then we have the following relations. Recall $A \subseteq \bb{F}_q^{\ast}$ is a set of representatives for $\bb{F}_q^{\ast}/\{\pm\}$.
 \begin{enumerate}[(a)]
 \item $\displaystyle 2\tr(0)+ \sum_{x \in A\setminus\{1,\sqrt{-1}\}}\tr(x+x^{-1}) = \chi_{\Ind_B^{\PSL_2(\bb{F}_q)}(\chi_0)} - \frac{2}{q-1}\chi_{\Ind_U^{\PSL_2(\bb{F}_q)}(\chi_0)}
$
\item$\displaystyle
\tr^{\square}(2) + \tr^{\not\square}(2) + (q+1)\operatorname{Id} = \frac{2}{q-1}\chi_{\Ind_U^{\PSL_2(\bb{F}_q)}(\chi_0)}
$
 \end{enumerate}
\end{lem}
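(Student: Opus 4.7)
Both parts of Lemma 6.6 assert equalities of class functions on $G=\PSL_2(\bb{F}_q)$: under the ring isomorphism $\bb{C}[G]\cong \operatorname{Maps}(G,\bb{C})$ introduced in Section 2, each formal sum of a conjugacy class (such as $\tr(0)$, $\tr^{\square}(2)$, or $\operatorname{Id}$) corresponds to the indicator function of that class, and indicator functions of conjugacy classes are class functions. The plan is therefore to verify each identity by evaluating both sides on a representative of every conjugacy class, using the $q\equiv 1\pmod 4$ character table reproduced above. Equivalently, one may decompose both sides into irreducible characters using column orthogonality of the character table, which is precisely the ``inverted character table'' approach used in Lemma 6.4.

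To handle the right-hand sides, the first step is to identify the two permutation characters. The character $\chi_{\Ind_B^G\chi_0}$ is that of the $G$-action on $G/B=\bb{P}^1(\bb{F}_q)$, which decomposes as $\chi_0+\chi_1$ by Proposition 5.7; hence its value at $g$ is the number of $\bb{F}_q$-rational fixed points of $g$ on $\bb{P}^1$. The character $\chi_{\Ind_U^G\chi_0}(g)$ counts $U$-cosets fixed by $g$; since $x^{-1}gx\in U$ forces $g$ to be unipotent of trace $2$, this character is supported on the identity and the two unipotent classes. A standard argument (the set of $x$ conjugating a nontrivial unipotent into $U$ is precisely the Borel $B$ stabilizing its unique fixed point on $\bb{P}^1$) gives the values $(q^2-1)/2$ at $\operatorname{Id}$ and $(q-1)/2$ at each nontrivial unipotent class.

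For the left-hand sides, part (b) follows immediately from a line-by-line check across the six conjugacy classes. Part (a) requires one combinatorial observation: each split-torus conjugacy class $\tr(x+x^{-1})$ appears in the formal sum $\sum_{x\in A\setminus\{1,\sqrt{-1}\}}\tr(x+x^{-1})$ with multiplicity exactly $2$, because the class depends only on the fiber $\{\pm x,\pm x^{-1}\}$, and $A$, being a set of representatives for $\bb{F}_q^\ast/\{\pm\}$, meets such a fiber in two elements whenever the four signs give distinct values. Together with the separately added term $2\,\tr(0)$, the LHS thus evaluates to $2$ on every split-torus class and on the $\tr(0)$ class and to $0$ elsewhere, exactly matching the right-hand side, whose only nonzero values are $\chi_0+\chi_1=2$ on the classes with two fixed points on $\bb{P}^1$.

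The main (modest) obstacle specific to this case is that $\sqrt{-1}\in\bb{F}_q^\ast$ when $q\equiv 1\pmod 4$ and $\sqrt{-1}+(\sqrt{-1})^{-1}=0$; thus the would-be value $x=\sqrt{-1}$ naturally belongs to $\tr(0)$ rather than to a generic split-torus class, and it would be counted with the ``wrong'' multiplicity $1$ (coming from only one of $\pm\sqrt{-1}$ lying in $A$). Excluding $\sqrt{-1}$ from the sum and reinserting $\tr(0)$ separately with coefficient $2$ is precisely what makes the multiplicity pattern uniform across all the relevant classes. Once this is handled, the remainder is a bookkeeping exercise identical in spirit to Lemma 6.4.
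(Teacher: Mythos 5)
Your proof is correct and is in substance the same as the paper's: the paper's (unwritten) proof is precisely such a finite character-table verification, and your pointwise check on conjugacy classes --- with $\chi_{\Ind_B^{\PSL_2(\bb{F}_q)}(\chi_0)}=\chi_0+\chi_1$ read off as the fixed-point count on $\bb{P}^1(\bb{F}_q)$ and $\chi_{\Ind_U^{\PSL_2(\bb{F}_q)}(\chi_0)}$ computed by coset counting (values $\tfrac{q^2-1}{2}$ at $\operatorname{Id}$, $\tfrac{q-1}{2}$ at each nontrivial unipotent class, $0$ elsewhere) --- is just the dual formulation of ``inverting the character table.'' Your bookkeeping observations, that each split class $\tr(x+x^{-1})$ occurs with multiplicity $2$ in the sum over $A\setminus\{1,\sqrt{-1}\}$ and that the excluded $x=\sqrt{-1}$ is exactly what the separate $2\tr(0)$ term accounts for when $q\equiv 1\pmod 4$, are the points the paper leaves implicit, and they are handled correctly.
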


Using this lemma to rewrite $\star$ gives
\[
\star = \frac{\epsilon}{q+1}(1-\rho)\Bigg( \frac{q+1-\epsilon}{q(q+1)} \chi_{\Ind_B^{\PSL_2(\bb{F}_q)}(\chi_0)} + \frac{\epsilon-1}{q}\tr_{E^c/k} \Bigg).
\]

And again, we use our previous calculations to conclude the proof showing the following formula,
\begin{align*}
A_{\Phi}^0 &= \frac{1}{2}\tr_{E^c/k} -\frac{\epsilon(q+1-\epsilon)}{q(q+1)}(1-\rho)\tr_{E^c/k} + \frac{\epsilon(q+1-\epsilon)}{q(q+1)^2}(1-\rho)\chi_{\Ind_B^{\PSL_2(\bb{F}_q)}(\chi_0)}.
\end{align*}
\end{proof}

\section{Conclusions}
The Colmez conjecture states that $\hfal(\Phi) = -Z(0,A_{\Phi}^0)$ for a CM type $\Phi$.  Before applying $Z(0,\cdot)$ to $A_{\Phi}^0$, we will rewrite $A_{\Phi}^0$ in terms of simple characters induced from subgroups of $\Gal(E^c/\bb{Q})$.  This has the advantage of simplifying the resulting $L$-functions.

Let $\chi_{k/\bb{Q}}, \chi_{E/F}$ denote the quadratic characters associated to the extensions $k/\bb{Q}$ and $E/F$ respectively.  We view $\chi_{k/\bb{Q}}$ as a function on $\Gal(E^c/\bb{Q})$ via the quotient map $\Gal(E^c/\bb{Q}) \twoheadrightarrow \Gal(k/\bb{Q})$.  We view $\chi_{E/F}$ as a function on $\Gal(E^c/F)$ via the quotient map $\Gal(E^c/F) \twoheadrightarrow \Gal(E/F)$.  Using this notation, the following lemma is a standard calculation.

\begin{lem} As functions on $\Gal(E^c/\bb{Q})$ we have the following relations:
\begin{enumerate}[(a)]
\item \[\displaystyle \tr_{E^c/k} = \frac{1}{2}\chi_{\Ind_{\Gal(E^c/k)}^{\Gal(E^c/\bb{Q})}(\chi_0)}\]
\item \[\displaystyle (1-\rho)\tr_{E^c/k} = \chi_{k/\bb{Q}}\]
\item \[\displaystyle(1-\rho)\chi_{\Ind_B^{\PSL_2(\bb{F}_q)}(\chi_0)} = \chi_{\Ind_{\Gal(E^c/F)}^{\Gal(E^c/\bb{Q})} (\chi_{E/F})}\]
\end{enumerate}
\end{lem}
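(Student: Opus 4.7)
My plan is to verify all three identities by evaluating each side explicitly as a function on $\Gal(E^c/\bb{Q}) \cong \PSL_2(\bb{F}_q) \times \langle \rho \rangle$, using the group-ring / function-space identification from Section~2. The key structural inputs are that $\Gal(E^c/k) = \PSL_2(\bb{F}_q) \times \{1\}$ is an index-$2$ normal subgroup of $\Gal(E^c/\bb{Q})$ and that $\Gal(E^c/F) = B \times \langle \rho \rangle$; with these identifications, every object on both sides of each identity can be made completely explicit via the Frobenius induction formula.

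Parts (a) and (b) are routine. For (a), the element $\tr_{E^c/k} = \sum_{g \in \Gal(E^c/k)} g$ corresponds under the identification to the indicator function $\mathbf{1}_{\Gal(E^c/k)}$. Since $\Gal(E^c/k)$ has index $2$, the induced character $\chi_{\Ind_{\Gal(E^c/k)}^{\Gal(E^c/\bb{Q})}(\chi_0)}$ is the character of the permutation representation on the two cosets, which equals $2 \cdot \mathbf{1}_{\Gal(E^c/k)}$; dividing by $2$ gives (a). For (b), multiplying in the group ring yields $(1-\rho)\tr_{E^c/k} = \sum_{h \in \Gal(E^c/k)} h - \sum_{h \in \Gal(E^c/k)} \rho h$, which as a function is $+1$ on $\Gal(E^c/k)$ and $-1$ on $\rho\, \Gal(E^c/k)$. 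This is precisely the inflation to $\Gal(E^c/\bb{Q})$ of the nontrivial character of $\Gal(k/\bb{Q}) = \Gal(E^c/\bb{Q})/\Gal(E^c/k)$, namely $\chi_{k/\bb{Q}}$.

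For part (c), the main content, I would evaluate both sides at a general element $(g_0, \rho^j) \in \PSL_2(\bb{F}_q) \times \langle \rho \rangle$. On the left-hand side, I view $\chi_{\Ind_B^{\PSL_2(\bb{F}_q)}(\chi_0)}$ as the element $\sum_{g_0 \in \PSL_2(\bb{F}_q)} \chi_{\Ind_B^{\PSL_2(\bb{F}_q)}(\chi_0)}(g_0) \cdot (g_0, 1)$ of $\bb{C}[\Gal(E^c/\bb{Q})]$, consistent with the treatment of $\tr_{E^c/k}$ in (a). Multiplication by $1-\rho$ in the group ring then produces a function taking the value $\chi_{\Ind_B^{\PSL_2(\bb{F}_q)}(\chi_0)}(g_0)$ at $(g_0, 1)$ and the value $-\chi_{\Ind_B^{\PSL_2(\bb{F}_q)}(\chi_0)}(g_0)$ at $(g_0, \rho)$. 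For the right-hand side, I would apply the Frobenius induction formula with $H = \Gal(E^c/F) = B \times \langle \rho \rangle$ and $\psi = \chi_{E/F}$. Writing $x = (y, \rho^k)$, the conjugate $x(g_0,\rho^j)x^{-1} = (yg_0y^{-1}, \rho^j)$ lies in $H$ iff $yg_0 y^{-1} \in B$, and on such elements $\chi_{E/F}$ takes the value $(-1)^j$. Summing over $x$ and dividing by $|H| = 2|B|$ gives
\[
\chi_{\Ind_H^{\Gal(E^c/\bb{Q})}(\chi_{E/F})}\bigl((g_0,\rho^j)\bigr) = (-1)^j \cdot \chi_{\Ind_B^{\PSL_2(\bb{F}_q)}(\chi_0)}(g_0),
\]
matching the left-hand side.

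The only subtlety — not really an obstacle — is keeping straight the distinction between an element of $\bb{C}[\PSL_2(\bb{F}_q)]$, its image in $\bb{C}[\Gal(E^c/\bb{Q})]$ under the embedding $g_0 \mapsto (g_0, 1)$, and the associated $\bb{C}$-valued function on $\Gal(E^c/\bb{Q})$. Once these bookkeeping choices are fixed consistently (and they are forced, since (b) must yield $\chi_{k/\bb{Q}}$ rather than $0$), each identity becomes a one-line application of the Frobenius induction formula combined with the direct product structure of the Galois group.
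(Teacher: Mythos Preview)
Your proof is correct. The paper does not actually supply a proof of this lemma---it simply declares it ``a standard calculation''---and your argument via the Frobenius induction formula and the product decomposition $\Gal(E^c/\bb{Q}) \cong \PSL_2(\bb{F}_q) \times \langle \rho \rangle$ is precisely the standard calculation the author has in mind, carried out carefully and with the bookkeeping about the group-ring/function identification made explicit.
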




Using this lemma, we can rewrite $A_{\Phi}^0$ as
\begin{align*}
A_{\Phi}^0 &= \frac{1}{4}\chi_{\Ind_{\Gal(E^c/k)}^{\Gal(E^c/\bb{Q})}(\chi_0)} -\frac{\epsilon(q+1-\epsilon)}{q(q+1)}\chi_{k/\bb{Q}} + \frac{\epsilon(q+1-\epsilon)}{q(q+1)^2} \chi_{\Ind_{\Gal(E^c/F)}^{\Gal(E^c/\bb{Q})} (\chi_{E/F})}.
\end{align*}

Finally combining all of the above results gives the following theorem.
\begin{thm}
The Colmez Conjecture is true for $E$.  That is, let $\Phi$ be a CM type of $E$ of signature $(q+1-\epsilon,\epsilon)$.
Then,
\begin{align*}
\hfal(\Phi) &=-Z(0,A_{\Phi}^0) \\
&= -\frac{1}{4}Z(0,\zeta_k)+\frac{\epsilon(q+1-\epsilon)}{q(q+1)}Z(0,\chi_{k/\bb{Q}}) - \frac{\epsilon(q+1-\epsilon)}{q(q+1)^2}Z(0,\chi_{E/F}).
\end{align*}
\end{thm}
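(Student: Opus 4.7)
The plan is to combine the explicit formula for $A_\Phi^0$ given just before Theorem 7.2 (which already incorporates Theorem 6.1 and Lemma 7.1) with the Artin $L$-function formalism in order to compute $-Z(0, A_\Phi^0)$, and then to equate this quantity with $\hfal(\Phi)$ using the Yang--Yin refinement of the averaged Colmez conjecture together with the signature-only dependence of $A_\Phi^0$.

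For the $L$-function side I would start from
\[
A_\Phi^0 = \tfrac{1}{4}\,\chi_{\Ind_{\Gal(E^c/k)}^{\Gal(E^c/\mathbb{Q})}(\chi_0)} - \tfrac{\epsilon(q+1-\epsilon)}{q(q+1)}\,\chi_{k/\mathbb{Q}} + \tfrac{\epsilon(q+1-\epsilon)}{q(q+1)^2}\,\chi_{\Ind_{\Gal(E^c/F)}^{\Gal(E^c/\mathbb{Q})}(\chi_{E/F})}
\]
and apply $Z(0,\cdot)$ by linearity. Artin's inductivity of $L$-functions, $L(s, \Ind_H^G \psi) = L(s, \psi)$, combined with the conductor--discriminant formula so that the $\tfrac{1}{2}\log f_\chi$ summand in the definition of $Z(s, \chi)$ transforms correctly under induction, collapses the first induced summand to $\tfrac{1}{4} Z(0, \zeta_k)$ (the trivial character of $\Gal(E^c/k)$ inducing up to $\Gal(E^c/\mathbb{Q})$ has Artin $L$-function $\zeta_k$ and conductor $|d_k|$) and the third to $Z(0, \chi_{E/F})$; the middle term is already a quadratic Hecke character and contributes $Z(0, \chi_{k/\mathbb{Q}})$ directly. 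The result is precisely the right-hand side displayed in the theorem.

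To upgrade this $L$-function identity to the geometric equality $\hfal(\Phi) = -Z(0, A_\Phi^0)$, I would invoke the Yang--Yin refinement~\cite{YangYin}, which establishes the averaged Colmez conjecture within each signature class $\Phi(E)_\epsilon$. By Theorem 6.1, the right-hand side $-Z(0, A_\Phi^0)$ is already constant on each such class — a striking fact given the large number of inequivalent CM types of fixed signature tabulated in Section 4 — so the averaged identity pins down the common value of $\hfal(\Phi)$ across $\Phi(E)_\epsilon$. The main obstacle I expect is precisely this last inference: the averaged identity by itself controls only the average, so one must check that the Yang--Yin refinement, as stated in~\cite{YangYin}, either supplies the auxiliary constancy of $\hfal$ on signature classes or is formulated in a way that directly promotes constancy of the class function $A_\Phi^0$ to the individual height identity. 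Once this compatibility is confirmed, the remainder of the proof is a mechanical application of the Artin $L$-function formalism laid out above, and the theorem follows by reading off the coefficients of $Z(0, \zeta_k)$, $Z(0, \chi_{k/\mathbb{Q}})$, and $Z(0, \chi_{E/F})$.
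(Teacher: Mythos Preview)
Your approach is exactly the paper's, and your computation of $-Z(0,A_\Phi^0)$ via Artin inductivity and linearity is correct. The one point you left open --- how to pass from the Yang--Yin signature-averaged identity to the individual identity $\hfal(\Phi)=-Z(0,A_\Phi^0)$ --- is not resolved by anything in \cite{YangYin} itself. The paper closes it instead by invoking Colmez's Th\'{e}or\`{e}me~0.3 in \cite{Col}, which shows that $\hfal(\Phi)$ depends only on the class function $A_\Phi^0$. Combined with Theorem~6.1 (constancy of $A_\Phi^0$ on each signature class), this forces $\hfal(\Phi)$ to be constant on $\Phi(E)_\epsilon$ as well, and then the averaged equality (7.1) immediately yields the individual one. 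So the missing ingredient you were looking for lives in \cite{Col}, not in \cite{YangYin}; once you insert that citation, your argument is complete and coincides with the paper's.
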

\begin{proof}
Let $\Phi(E)_{\epsilon}$ denote all CM types of signature $(q+1-\epsilon,\epsilon)$.  A result of Yang and Yin \cite{YangYin} states
\begin{equation}
\sum_{\Phi \in \Phi(E)_{\epsilon}} \hfal(\Phi) = \sum_{\Phi \in \Phi(E)_{\epsilon}} -Z(0,A_{\Phi}^0).
\end{equation}

Our theorem 6.1 shows that $A_{\Phi}^0$ is constant among CM types of a given signature.  Colmez's Th\'{e}or\`{e}me 0.3 \cite{Col} shows that the Faltings height of a CM type $\Phi$ only depends on $A_{\Phi}^0$.  If $\Phi$ is a CM type of $E$ of signature $(q+1-\epsilon,\epsilon)$, then equation 7.1 implies that $\hfal(\Phi) = -Z(0,A_{\Phi}^0)$.  The expression for $\hfal(\Phi)$ is due to our calculation of $A_{\Phi}^0$ immediately preceding this theorem as well as the linearity of $Z(0,\cdot)$.
\end{proof}

\section{Acknowledgements} The author thanks Tonghai Yang for all of his help. The author would also like to thank Alisha Zachariah and Shamgar Gurevich for their help and references regarding $\PSL_2(\bb{F}_q)$.  Finally, the author would like to thank Juliette Bruce for her valuable feedback on this paper. This work was done with the support of National Science Foundation grant DMS-1502553.

\bibliographystyle{alpha}
\bibliography{/Users/salvatoreparenti/Desktop/TexFiles/mybib}  

\end{document}